\pgfplotsset{compat=newest}
\tikzset{font=\footnotesize} 
\journal{Annals of Operations Research}
\newcolumntype{K}{>{\hspace*{-0.6\tabcolsep}}l}
\newcolumntype{M}{l<{\hspace*{-0.6\tabcolsep}}}
\newcolumntype{C}{>{\hspace*{-0.6\tabcolsep}}c}
\crefname{subsection}{section}{subsections}
\newtheorem{theorem}{Theorem}[section]
\newtheorem{lemma}[theorem]{Lemma}
\theoremstyle{definition}
\newtheorem{remark}[theorem]{Remark}
\newtheorem{definition}[theorem]{Definition}
\newtheorem{example}[theorem]{Example}
\newcommand{\ex}{\mathbb{E}}
\newcommand{\F}{\bar{F}}
\newcommand{\m}{\mathrm{m}}
\newcommand{\h}{\mathrm{h}}
\newcommand{\e}{\mathrm{\ell}}
\newcommand{\qt}{\enquote}
\renewcommand{\b}{\bfseries}
\renewcommand{\d}{\mathrm{d}}
\newcommand{\g}{\mathrm{g}}
\newcommand{\us}{\Pi^*_s\(r\)}
\newcommand{\du}{\mathrm{d}u}
\renewcommand{\(}{\left(}
\renewcommand{\)}{\right)}
\newcommand{\G}{\bar{G}}
\renewcommand{\(}{\left(}
\renewcommand{\)}{\right)}
\newcommand{\mrl}{\preceq_{\text{mrd}}}
\newcommand{\hr}{\preceq_{\text{hr}}}
\newcommand{\st}{\preceq_{\text{st}}}
\newcommand{\cx}{\preceq_{\text{cx}}}
\newcommand{\disp}{\preceq_{\text{disp}}}
\newcommand{\ew}{\preceq_{\text{ew}}}
\newcommand{\var}{\operatorname{Var}}
\newcommand{\el}{\preceq_{\text{el}}}
\newcommand{\A}{{\text{Agg}}}
\definecolor{ppcolor}{rgb}{0.212,0.506,0.584}
\definecolor{pcolor}{rgb}{0.255,0.596,0.686}
\definecolor{rcolor}{rgb}{0.294,0.675,0.776}
\definecolor{ecolor}{rgb}{0.569,0.765,0.835}
\definecolor{scolor}{rgb}{0.733,0.843,0.890}
\definecolor{ocolor}{rgb}{0.783,0.893,0.940}
\definecolor{oocolor}{rgb}{0.883,0.893,0.940}
\newcommand{\clrone}{\rowcolor{ocolor}}
\newcommand{\clrtwo}{\rowcolor{ecolor}}
\newcommand{\bottomline}{\noalign{\global\arrayrulewidth=0.3mm}\arrayrulecolor{pcolor}\hline}
\definecolor{LightCyan}{rgb}{0.8,1,1}
\definecolor{LightCyan2}{rgb}{0.1,1,1}
\newcommand{\specialcell}[2][l]{%
  \begin{tabular}[#1]{@{}l@{}}#2\end{tabular}}
\begin{document}
\begin{frontmatter}

\title{Monopoly Pricing in Vertical Markets with Demand Uncertainty}

\author[kaq]{Stefanos Leonardos\texorpdfstring{\corref{cor}}{l}}\ead{stefanos\_leonardos@sutd.edu.sg}
\author[kap]{Costis Melolidakis}\ead{cmelol@math.uoa.gr}
\author[kat]{Constandina Koki}\ead{kokiconst@aueb.gr}
\address[kaq]{Singapore University of Technology and Design, 8 Somapah Rd, 487372 Singapore}
\address[kap]{National and Kapodistrian University of Athens, Panepistimioupolis, 15784 Athens, Greece}
\address[kat]{Athens University of Economics and Business, 76 Patission Street, 10434 Athens, Greece}
\cortext[cor]{Corresponding author}

\begin{abstract}
\textbf{Motivation:} Pricing decisions are often made when market information is still poor. While modern pricing analytics aid firms to infer the distribution of the stochastic demand that they are facing, data-driven price optimization methods are often impractical or incomplete if not coupled with testable theoretical predictions. In turn, existing theoretical models often reason about the response of optimal prices to changing market characteristics without exploiting all available information about the demand distribution. \textbf{Academic/practical relevance:} Our aim is to develop a theory for the optimization and systematic comparison of prices between different instances of the same market under various forms of knowledge about the corresponding demand distributions. \textbf{Methodology:} We revisit the classic problem of monopoly pricing under demand uncertainty in a vertical market with an upstream supplier and multiple forms of downstream competition between arbitrary symmetric retailers. In all cases, demand uncertainty falls to the supplier who acts first and sets a uniform price before the retailers observe the realized demand and place their orders. \textbf{Results:} Our main methodological contribution is that we express the price elasticity of expected demand in terms of the \emph{mean residual demand} (MRD) function of the demand distribution. This leads to a closed form characterization of the points of unitary elasticity that maximize the supplier's profits and the derivation of a mild unimodality condition for the supplier's objective function that generalizes the widely used \emph{increasing generalized failure rate} (IGFR) condition. A direct implication is that optimal prices between different markets can be ordered if the markets can be \emph{stochastically ordered} according to their MRD functions or equivalently, their elasticities. Using the above, we develop a systematic framework to compare optimal prices between different market instances via the rich theory of \emph{stochastic orders}. This leads to comparative statics that challenge previously established economic insights about the effects of market size, demand transformations and demand variability on monopolistic prices. \textbf{Managerial implications:} Our findings complement data-driven decisions regarding price optimization and provide a systematic framework useful for making theoretical predictions in advance of market movements.
\end{abstract}

\begin{keyword} Monopoly Pricing \sep Revenue Maximization \sep Demand Uncertainty \sep Pricing Analytics \sep Comparative Statics \sep Stochastic Orders \sep Unimodality
\MSC[2010] 91A10 \sep 91A65 \sep 91B54
\end{keyword}

\end{frontmatter}

\section{Introduction}\label{sec:intro}

Making optimal pricing decisions is a crucial driver for firms' profitability and a well-studied problem in the existing theoretical literature. However, modern markets pose novel opportunities and challenges for firms' pricing decisions. On the one hand, the sheer amount of historical price/demand data and the wide range of pricing analytics methods allow firms to make more informed decisions. One the other hand, the inherent volatility of contemporary economies frequently renders such data-driven methods impractical. Sellers, often launch new or differentiated products for which demand is unknown or introduce existing products to uncharted emerging markets \citep{Co16}. In other cases, firms act as wholesalers in foreign markets for which they have asymmetrically less information than local retailers or sell their products over competitive digital platforms to highly diversified clienteles \citep{Che18}. More generally, firms often need to test the outcome of price changes in advance of anticipated market movements or to constantly adjust their prices in periods of turbulent market conditions.\par
Common in all these cases is that firms have to make important pricing decisions when market information is still poor \citep{Li05}. While uncertainties can be mitigated via marketing strategies or contracting schemes between the members of the supply chain, after all efforts, some uncertainty persists and the final point of interaction between wholesalers and retailers or more generally, between sellers and buyers is the selling price \citep{Li17,Ber19}. \par
Whenever possible, pricing analytics on historic price/demand data aid firms to build up knowledge about the (probability) distribution of the uncertain demand. The main challenge lies in leveraging this information to set and adjust prices optimally. However, data-driven approaches that extrapolate past trends to make forward-looking pricing decisions may lead to suboptimal decisions if not coupled with or benchmarked against testable theoretical predictions. In turn, existing theoretical tools to optimize and compare prices across different market instances (instances of the same market that correspond to different demand distributions) are still under development \citep{Xu10} or make partial use of the available information, e.g., rely on summary statistics \citep{La01}. Moreover, from a managerial perspective, such methods often provide optimality conditions that are not easy to assess in practice or which do not provide intuitive and economic interpretable results \citep{Be07}.

\paragraph{Model}
Motivated by the above, we revisit the classic problem of monopoly pricing with demand uncertainty under the informational assumption that the firm knows the probability distribution of the uncertain demand. This distribution may reflect the seller's informed belief or estimations aggregated from historical data. Our purpose is to link the properties of the demand distribution to economic interpretable conditions and to develop a systematic theoretical framework in which the firm can optimally set and adjust its prices according to changing market characteristics.\par
To account for the large variety of market structures that modern sellers are facing, we model the monopolistic firm as an upstream supplier who sells its product via a downstream market. The downstream market comprises an arbitrary number of retailers and various forms of market competition between the retailers, such as differentiated Cournot and Bertrand competition, no or full returns and collusions (cf. \Cref{tab:competition}).\footnote{The two-tier market model is an abstraction to capture the complexity of current markets. If we eliminate the second stage and assume that the supplier sells directly to the consumers, then our results still apply.} In all cases, retail demand is linear. In the first stage, the monopolistic firm sets a uniform price and in the second stage, the retailers observe the price and place their orders after the market demand has been realized. We assume that the supplier's capacity exceeds potential demand and that downstream retailers are symmetric. These assumptions serve the purpose to isolate the study of pricing decisions under uncertainty from various other strategic considerations such as stocking decisions, negotiation power, marketing and production \citep{Xue17,Don19}.\footnote{Under these assumptions, i.e., if the supplier's capacity exceeds potential demand and if the retailers are symmetric, then the single price scheme is optimal among a wide range of possible pricing mechanisms \citep{Har81,Ril83}. In addition, the symmetry of the retailers allows the study of purely competitive aspects which is not possible if retailers are heterogeneous \cite{Ty99}.}

\paragraph{Results}
Our main theoretical contribution is that we characterize the seller's optimal prices as fixed points of the \emph{mean residual demand} (MRD) function of the stochastic demand level. Informally, if $\alpha, r$ denote the random demand level and the supplier's price respectively, then any optimal price, $r^*$, satisfies the equation $r^*=\m\(r^*\)$, where $\m\(r\):=\ex\(\alpha-br\mid \alpha>br\)$ and $b$ is a proper scaling constant (that can be normalized to $b=1$). The MRD function measures the expected additional demand given that demand has reached or exceeded a threshold $br$.\footnote{In reliability applications, this function is known as the mean residual life (MRL), see \cite{Sh07,Lax06,Be13,Be16}.} This characterization stems from the observation that the \emph{price elasticity of expected demand} can be expressed in terms of the MRD function as $r/\m\(r\)$, cf. equation \eqref{eq:elasticity}. Thus, optimal prices that correspond to points of unitary elasticity are fixed points of the MRD function. If $\m\(r\)/r$ is decreasing (or equivalently, if the price elasticity is increasing), then there exists a unique such optimal price. Both statements are presented in \Cref{thm:main}.\par
The above unimodality condition for the otherwise not necessarily concave nor quasi-concave seller's revenue function, strictly generalizes the well-known \emph{increasing generalized failure rate} (IGFR) condition \citep{La01,Be07}. Given the inclusiveness of the IGFR conditions, this suggests that \Cref{thm:main} applies to essentially most distributions that are commonly used in economic modeling \citep{Pa05,La06,Ba13}. The expressions of the price elasticity of expected demand and the seller's optimal price in terms of well-understood characteristics of the demand distribution (MRD functions) offer a novel perspective to the otherwise standard linear stochastic model.\footnote{While our results directly apply to the more general demand function of \cite{Lop19}, we stick to the linear model for expositional purposes. Except from the fact that linear markets have been consistently in the spotlight of economic research both due to their tractability and their accurate modeling of real situations, the study of the linear model is also technically motivated by \cite{Co16} who demonstrate that when information about demand is limited, firms may act efficiently \emph{as if} demand is linear. For practical purposes, this yields a simple and low regret pricing rule and provides a motivation to study linear markets in a systematic way.} They provide conditions that are easy to assess in practice and which can be useful to gain intuitive and economic interpretable results \citep{Be07}. In particular, these expressions provide a novel way to derive comparative statics on the response of the optimal price to various market characteristics (expressed as properties of the demand distribution) and to measure market performance. \par
The key intuition along this line, which is formally established in \Cref{lem:technical}, is that the seller's optimal price is higher in \emph{less elastic} markets which are precisely markets that can be ordered in the MRD \emph{stochastic order}. In other words, if two demand distributions can be ordered in terms of their MRD functions, then the supplier's optimal price is higher in the market with the dominating MRD function. Thus, the fact that elasticity is a critical factor in setting profitable prices and an important determinant of price changes in response to demand changes is formalized in terms of well-known demand characteristics. As a result, \Cref{lem:technical} is the key to leverage the theory of stochastic orders as a tool to compare prices in markets with different characteristics, such as market size and demand variability. Stochastic orders take into account various characteristics of the underlying demand distribution going thus, beyond summary statistics such as expectation or standard deviation, which provide a limited amount of information \citep{Sh07}.\par
In the comparative statics analysis, we start with the effect of market size in optimal prices and ask whether \emph{larger markets give rise to higher prices}. Our first finding is that stochastically larger markets do not necessarily lead to higher wholesale prices (\Cref{sub:larger}). Technically, this follows from the fact that the usual stochastic order does not imply (nor is implied by) the $\mrl$-order \citep{Sh07}. Hence, the intuition of \cite{La01} that \qt{size is not everything} and that prices are driven by different forces is justified by an appropriate theoretical framework. As \Cref{lem:technical} demonstrates, the correct criterion to order prices is the elasticity of different market instances rather than their size. In Theorem 4.3, we use this to derive a collection of demand transformations that preserve the MRD order (i.e., the elasticity) and hence the order of optimal prices. Returning to the effects of market size, one may still ask whether there exist conditions under which a probabilistic increase in market demand will lead to an increase in optimal prices. This question is addressed in \Cref{thm:size}, where we show that this is indeed the case whenever the initial demand is increased by a scale factor greater than one or (under some mild additional conditions) whenever an additional demand source is aggregated.\par
Next, we turn to the effect of demand variability. Does the seller charge higher/lower prices in more variable markets? The answer to this question again depends on the exact notion of variability that will be used. Under mild additional assumptions, \Cref{thm:var} gives two variability orders that preserve monotonicity: in both cases the seller charges a lower price in the less variable market. This conclusion remains true under the mean preserving transformation that is used by \cite{Li05,Li17}. However, as was the case with market size, the general statement that \emph{more variable markets give rise to higher prices} does not hold. In \Cref{sub:parametric}, we provide an example to show that this generalization fails in the standard case of parametric families of distributions that are compared in terms of their coefficient of variation, cf. \cite{La01}. All results from the comparative statics analysis are summarized in \Cref{tab:statics}.\par
We then turn to measure market performance and efficiency. Our main result in this direction, is a distribution-free (over the class of distributions with decreasing MRD function) upper bound  on the probability of a stockout, i.e., of no trade between the supplier and the retailers (\Cref{thm:notrade}). As shown in Examples~\ref{ex:exponential}, \ref{ex:beta} and \ref{ex:pareto}, this bound is tight and cannot be further generalized to distributions with increasing price elasticity of expected demand. In case that trade takes place, we measure market efficiency in terms of the realized profits and their distribution between the supplier and the retailers. Our results are summarized in \Cref{thm:share}. As intuitively expected (and in line with earlier results, cf. \cite{Ai12}), the supplier's profits are always higher if he is informed about the exact demand level and when retail competition is higher. However, there exists a range of intermediate demand realizations for which the supplier captures a larger share of the aggregate profits in the stochastic market. \par
Finally, we compare the aggregate realized profits between the deterministic and stochastic markets. The outcomes depend on the interplay between demand uncertainty and the level of retail competition. More specifically, there exists an interval of demand realizations for which the aggregate profits of the stochastic market are higher than the profits of the deterministic market. The interval reduces to a single point as the number of downstream retailers increases, but is unbounded in the case of $2$ retailers. In particular, for $n=1,2$, the aggregate profits of the stochastic market remain strictly higher than the profits of the deterministic market for all large enough realized demand levels. However, the performance of the stochastic market in comparison to the deterministic market degrades linearly in the number of competing retailers for demand realizations beyond this interval. This shows that uncertainty on the side of the supplier is more detrimental for the aggregate market profits when the level of retail competition is high, cf. \Cref{thm:agg}.

\subsection{Related Literature}\label{sub:literature}
The study of price-only contracts under demand uncertainty has been long motivated by \cite{Har81,Ril83} who show that committing to a single price is the optimal pricing strategy if the supplier's capacity exceeds potential demand and retailers are symmetric or if the monopolist is facing a known demand distribution. If the monopolist does not know the distribution of demand a-priori, as we assume in the present paper, then dispersed pricing improves upon the performance of a uniform price \cite{Da01}. By contrast, \cite{Ai12} show that wholesale price contracts are optimal even in the case of two competing chains. Compelling arguments for the linear pricing scheme are also provided in \cite{Ty99,Hw18} and references within. \cite{Pe07} and \cite{Be05} argue that apart from their practical prevalence, price-only contracts are relevant in modeling worst-case scenarios or interaction between sellers and buyers in cases with remaining uncertainty, i.e., after any efforts have been made to reduce the initial uncertainty through more elaborate schemes. This is further supported by \cite{Li17} who find that in a vertical market with a single manufacturer and a single retailer that is governed by a wholesale contract between them, less uncertainty may harm either or both members of the supply chain. This is partially the case also in the current model as we show in \Cref{sub:aggregate}. In particular, we find demand realizations for which the stochastic market outperforms the deterministic market in terms of aggregate profits. However, in our model, the supplier is always better off with reduced uncertainty whereas the retailers may be not.\par
The vertical market of a single supplier and multiple downstream competing retailers has been widely studied from different perspectives \citep{Pa97,Ty99,Ya06,Wu12}. Our results in \Cref{thm:share,thm:agg} are comparable with or mirror earlier findings in this line of literature. However, in the current setting, these findings complement our main results on the characterization of the demand elasticity in terms of the MRD function and the resulting comparative statics rather than being the main focus of our study. Concerning its other assumptions (market structure and timing of demand realization), our model enjoys similarities with the model of \cite{Wo97}. For extensive surveys of similar demand models, we refer to \cite{Hu13} and for earlier studies to \cite{Yao06}. Indicatively, the additive demand model with linear deterministic component is used by \cite{Pe99} and \cite{Ye16}. \par
Regarding the derived unimodality conditions, our findings are most closely related to \cite{La06,Be07} who derive the IGFR unimodality condition in the setting of one seller and one buyer. These are special cases of the present setting and, accordingly, the IGFR condition is a restriction of the unimodality condition in terms of the MRD function that we formulate in the current setting. IGFR distributions were first used in economic applications by \cite{Si76} and were popularized in the context of revenue management by \cite{La01}. Technical aspects of IGFR distributions are studied in \cite{Pa05,Ba13}. These results are more closely related to a companion paper \citep{Le18}, in which the authors focus on the technical properties of distributions that satisfy the current unimodality condition in terms of the MRD function.\footnote{Other closely related papers in the same direction include \cite{Leo20,Leon20,Leo21}, preliminary versions of which appear in \cite{Bel18,Kok18}.} The MRD function also arises naturally in many revenue management problems with demand uncertainty, see e.g., \cite{Ma18,Lu16,Co12,So09,So08} and \cite{Pe99} for a non-exhaustive list of related papers. However, to the best of our knowledge, there is no formal link between the elasticity of uncertain demand and the MRD function or the theory of stochastic orders in these papers. \par 
Similarities regarding the technical analysis can also be found between the current model and the literature on the price-setting newsvendor under stochastic demand, see e.g., \cite{Che04,Ky18,He18,Ko11,Ko18}. However, as the rest of the literature about the newsvendor, these results involve inventory considerations and hence are quite distinct from ours.\par
More closely related to the current methodology is the study of \cite{Xu10}. This paper is quite distinct from ours since it focus on a restricted set of stochastic orders and in a different newsvendor model that involves both pricing and stocking decisions. Concerning the results, \cite{Xu10} show that a stochastically larger demand leads to higher selling prices for the additive demand case. We extend these findings by showing that different notions of market size still lead to the same conclusion under certain conditions (\Cref{thm:transform,thm:size}) and by providing a case under which prices may be actually lower in a stochastically larger market, cf. \Cref{sub:larger}. Within similar contexts, \cite{La01,Li05,Kr10,Ch17} show that in general, optimal prices decrease as variability increases. Our current set of results refines these findings by using a wide range of stochastic orders that capture different notions of demand variability. Our findings suggest that increased demand variability may lead to both increased or decreased prices depending on the notion of variability that will be employed (cf. \Cref{sec:variability}). This demonstrates how various forms of knowledge about the demand distribution can be useful in the study of price movements and provides a theoretical explanation for empirically observed price changes in periods of turbulent market conditions.

\subsection{Outline}
The rest of the paper is structured as follows. In \Cref{sec:model,sec:main}, we define and analyze our model. \Cref{sec:statics} contains the comparative statics and \Cref{sec:performance} the study of market performance. \Cref{sec:conclusions} concludes the paper.

\section{The Model}\label{sec:model}
We consider a vertical market with a monopolistic upstream \emph{supplier} or seller, selling a homogeneous product (or resource) to $n=2$ downstream symmetric retailers who compete in a market with retail demand level $\alpha$\footnote{To ease the exposition, we restrict to $n=2$ retailers. As we show in \Cref{subn}, our results admit a straightforward generalization to arbitrary number $n$ of symmetric retailers.}. The supplier produces at a constant marginal cost which we normalize to zero. This corresponds to the situation in which the supplier's capacity exceeds potential demand by the retailers and the supplier's lone decision variable is his wholesale price, or equivalently his profit margin, $r\ge0$. \par
The supplier acts first (Stackelberg leader) and applies a linear pricing scheme without price differentiation, i.e., he chooses a unique wholesale price, $r\ge0$, for all retailers. We consider a market setting in which the supplier is less informed than the retailers about the \emph{retail demand level} $\alpha$. \footnote{We will refer to $\alpha$ throughout as the demand level. However, based on equation \eqref{demand}, $\alpha$ is also known as the choke or reservation price. Since, these constants are equivalent up to some transformation in our model, this should cause no confusion.} To model this, we assume that after the supplier's pricing decision but prior to the retailers' order decisions, a value for $\alpha$ is realized from a continuous (not-atomic) cumulative distribution function (cdf) $F$, with finite mean $\ex\alpha <\infty$ and nonnegative values, i.e., $F\(0\)=0$. Equivalently, $F$ can be thought of as the supplier's belief about the demand level and, hence, about the retailers' willingness-to-pay his price. We write $\F:=1-F$ for the tail distribution of $F$ and $f$ for its probability density function (pdf) whenever it exists. The support of $F$ is denoted by $S$, with lower bound $L=\sup{\{r\ge0: F\(r\)=0\}}$ and upper bound $H=\inf{\{r\ge0: F\(r\)=1\}}$ such that $0\le L\le H\le\infty$. We don't make any additional assumption about $S$: in particular, it may or may not be an interval. The case $L=H$ is not excluded\footnote{Formally, this case contradicts the assumption that $F$ is continuous or non-atomic. It is only allowed to avoid unnecessary notation and should cause no confusion.} and corresponds to the situation in which the supplier is completely informed about the retail demand level. \par
Given the demand realization $\alpha$, the aggregate quantity $q\(r\mid \alpha\):=\sum_{i=1}^2q_i\(r\mid \alpha\)$ that the retailers will order from the supplier is a function of the posted wholesale price $r$. Assuming risk neutrality, the supplier aims to maximize his expected profit function $\Pi_s$, which is equal to
\begin{equation}\label{suppliera}\Pi_s\(r\)=r\cdot\ex_{\alpha}q\(r\mid \alpha\).\end{equation}
The quantity $q\(r\mid\alpha\)$ depends on the form of second stage competition between the retailers. In this paper, we focus on markets with linear demand as in \cite{Mi59,Pe99,Hu13} and \cite{Co16} among others, and allow for a wide range of competition structures between the retailers (\Cref{tab:competition}). All these structures give rise -- in equilibrium -- to the same (up to a scaling constant) functional form for $q\(r\mid\alpha\)$ and hence to the same mathematical expression for the supplier's objective function. More importantly, in all these structures, the second-stage equilibrium between the retailers is unique and hence, $q\(r\mid\alpha\)$ is uniquely determined under the assumption that the retailers follow their equilibrium strategies in the game induced by each wholesale price $r\ge0$ (subgame perfect equilibrium). Specifically, we assume that each retailer $i$ faces the inverse demand function
\begin{equation}\label{demand}p_i=\alpha-\beta q_i-\gamma q_j,\end{equation}
for $j=3-i$ and $i=1,2$. Here, $\alpha/\(\beta+\gamma\)$ denotes the potential market size (primary demand), $\beta/\(\beta^2-\gamma^2\)>0$ the store-level factor and $\gamma/\beta$ the degree of product differentiation or substitutability between the retailers \citep{Si84,Wu12}. As usual, we assume that $\beta\le\gamma$. Each retailer's only cost is the wholesale price $r\ge0$ that she pays to the supplier. Hence, each retailer aims to maximize her profit function $\Pi_i$, which is equal to
\begin{equation}\label{retailer}\Pi_i\(q_i,q_j\)=q_i\(p_i-r\).\end{equation}
Given the demand realization $\alpha$, the equilibrium quantities $q_i^*:=q^*_i\(r\mid \alpha\)$ that maximize $\Pi_i$ for $i=1,2$ are given for various retail market structures in \Cref{tab:competition} as functions of the wholesale price $r$. Here, $\(\alpha-r\)_+$ denotes the positive part, i.e., $\(\alpha-r\)_+:=\max{\{0,\alpha-r\}}$. The assumption of no uncertainty on the side of retailers about the demand level $\alpha$ implies that $q_i^*$ corresponds both to the quantity that each retailer orders from the supplier and to the quantity that she sells to the market.
  
\begin{table*}[!htb]
\centering
\setlength{\tabcolsep}{3pt}
\renewcommand{\arraystretch}{1.4}
\arrayrulecolor{pcolor}
\begin{tabular}{ll}
\clrtwo
\textbf{Retail market structure} & \textbf{Retailer $i$'s equilibrium order}\\
\midrule[0.3mm]
\clrone
\cite{Si84}\hspace*{165pt}&\\\\[-0.5cm]
Cournot competition -- product differentiation & $q_i^*=\frac1{2\beta+\gamma}\(\alpha-r\)_+$ \\
Bertrand competition -- product differentiation & $q_i^*=\frac{\beta}{\(2\beta-\gamma\)\(\beta+\gamma\)}\(\alpha-r\)_+$ \\[0.15cm]
\midrule[0.3mm]
\clrone
\cite{Pa97}&\\\\[-0.5cm]
Single retailer no/full returns & $q^*=\frac1{2\beta}\(\alpha-r\)_+$ \\
Competing retailers (orders/price) -- no returns & $q_i^*=\frac1{2\beta+\gamma}\(\alpha-r\)_+$ \\
Competing retailers (orders/price) -- full returns & $q_i^*=\frac{\beta}{\(2\beta-\gamma\)\(\beta+\gamma\)}\(\alpha-r\)_+$\\[0.15cm]
\clrone
\midrule[0.3mm]
\cite{Ya06}&\\\\[-0.5cm]
Collusion between retailers -- product differentiation & $q_i^*=\frac1{\beta+\gamma}\(\alpha-r\)_+$\\[0.1cm]
\bottomline
\end{tabular}
\caption{Second-stage market structures (first column) and quantities, $q_i^*$, of retailer $i$ ordered from the supplier in equilibrium (second column). The equilibrium quantities are equal -- up to a multiplicative constant -- and unique which implies that the supplier's maximization problem is essentially the same for any of these forms of downstream competition.}
\label{tab:competition}
\end{table*}
The standard Cournot and Betrand outcomes arise as special cases of the above. In particular, for $\gamma=0$, the goods are independent and the monopoly solution $q_i^*=\frac1{2\beta}\(\alpha-r\)_+$ for $i=1,2$ prevails. For $\gamma=\beta>0$, the goods are perfect substitutes with $q^*_i=\frac1{2\beta}\(\alpha-r\)_+$ in Bertrand competition (at zero price) and $q^*_i=\frac{1}{3\beta}\(\alpha-r\)_+$ in Cournot competition for $i=1,2$. All of the above are assumed to be common knowledge among the participants in the market (the supplier and the retailers).

\section{Equilibrium analysis: supplier's optimal wholesale price}\label{sec:main}
We restrict attention to subgame perfect equlibria of the extensive form, two-stage game.\footnote{Technically, these are perfect Bayes-Nash equilibria, since the supplier has a belief about the retailers' types, i.e., their willingness-to-pay his price, that depends on the value of the stochastic demand parameter $\alpha$.} Assuming that at the second stage, the retailers play their unique equilibrium strategies $\(q_1^*,q_2^*\)$, then, according to \eqref{suppliera}, the supplier will maximize $\Pi^*_s\(r\)=r\cdot \ex_{\alpha}q^*\(r\mid \alpha\)$. For the competition structures of \Cref{tab:competition}, $q^*\(r\mid\alpha\)$ has the general form $q^*\(r\mid\alpha\)=\lambda_M \(\alpha-r\)_+$, where $\lambda_M>0$ is a suitable model-specific constant. Thus, at equilibrium, the supplier's expected profit maximization problem becomes
\begin{equation}\label{eq:supplier}\max_{r\ge0}{\Pi^*_s\(r\)}=\lambda_M\cdot \max_{r\ge0}{r\ex\(\alpha-r\)_+}.\end{equation}
From the supplier's perspective, we are interested in finding conditions such that the maximization problem in \eqref{eq:supplier} admits a unique and finite optimal wholesale price, $r^*\ge0$.

\begin{remark}
The vertical market structure is not necessary for our analysis to hold. In fact, if we eliminate the downstream market and instead assume that the firm sells directly to a market with inverse linear demand function $q\(r\)=\alpha-\beta r$, then our analysis still applies. This follows from the observation that in this case, the seller's expected profit maximization problem is 
\[\max_{r\ge0}\Pi_s\(r\)=\max_{r\ge0}r\mathbb E\(\alpha-\beta r\)_+\] 
which is the same as the maximization problem in equation \eqref{eq:supplier} after normalizing $\beta$ to $1$.
\end{remark}

\subsection{Deterministic Market}
First, we treat the case in which the supplier knows the primary demand $\alpha$ (deterministic market). According to the notation introduced in \Cref{sec:model}, this corresponds to the case $\alpha=L=H$. In this case $\Pi^*_s\(r\)=\lambda_M r\(\alpha-r\)_+$ and it is straightforward that $r^*\(\alpha\)=\alpha/2$. Hence, the complete information two-stage game has a unique subgame perfect Nash equilibrium, under which the supplier sells with optimal price $r^*\(\alpha\)=\alpha/2$ and each retailer orders quantity $q_i^*$ as determined by \Cref{tab:competition}.

\subsection{Stochastic Market}
The equilibrium behavior of the market in which the supplier does not know the demand level (stochastic market) is less straightforward. Now, $L<H$ and the supplier is interested in finding an $r^*$ that maximizes his expected profit in \eqref{eq:supplier}. For an arbitrary demand distribution $F$, $\Pi^*_s\(r\)$ may not be concave (nor quasi-concave) and, hence, not unimodal, in which case the solution to the supplier's optimization problem is not immediate. To obtain a general unimodality condition, we proceed by differentiating the supplier's revenue function $\Pi_s^*\(r\)$. First, since $\(\alpha-r\)_+$ is nonnegative, we write $\ex\(\alpha-r\)_+ = \int_{0}^{\infty}P\(\(\alpha-r\)_+ >u\) \du =\int_{r}^{\infty}\F\(u\)\du$, for $0 \le r<H$. Since $\ex\alpha<\infty$ and $F$ is non-atomic by assumption, we have that
\begin{align*}\frac{\d}{\d r}\ex\(\alpha-r\)_+=\frac{\d}{\d r}\(\ex\alpha-\int_{0}^{r}\F\(u\)\du\)=-\F\(r\)\end{align*}
for any $0<r<H$. With this formulation, both the supplier's revenue function and its first derivative can be expressed in terms of the \emph{mean residual demand} (MRD) function of $\alpha$. In general, the MRD function, $\m\(\cdot\)$, of a nonnegative random variable $\alpha$ with cumulative distribution function (cdf) $F$ and finite expectation, $\ex \alpha <\infty$, is defined as
\begin{equation}\label{eq:mrl}\m\(r\):=\ex\(\alpha-r \mid \alpha >r\)=\,\dfrac{1}{\vphantom{\tilde\F}\F\(r\)}\int_{r}^{\infty}\F\(u\)\du, \quad\mbox{for } r< H\end{equation}
 and $\m\(r\):=0$, otherwise, see, e.g., \cite{Sh07,Lax06} or \cite{Be16}\footnote{In this literature, the MRD function is known as the \emph{mean residual life} function due to its origins in reliability applications.}. Using this notation, we obtain that $\us=\lambda_Mr\m\(r\)\F\(r\)$ and 
\begin{equation}\label{derivative}\frac{\d \Pi^*_s}{\d r}\(r\)=\lambda_M\(\m\(r\)-r\)\F\(r\)=\lambda_Mr\(\frac{\m\(r\)}{r}-1\)\F\(r\)\end{equation}
for $0<r<H$. Based on \eqref{derivative}, the first order condition (FOC) for the supplier's revenue function is that $\m\(r\)=r$ or equivalently that $\m\(r\)/r=1$. We call the expression 
\begin{equation}\label{gmrl}\e\(r\):=\frac{\m\(r\)}{r}, \quad 0<r<H\end{equation} 
the \emph{generalized mean residual demand} (GMRD) function, see \cite{Le18}, due to its connection to the \emph{generalized failure rate (GFR)} function $\g\(r\):=rf\(r\)/\F\(r\)$, defined and studied by \cite{La99} and \cite{La01}. Its meaning is straightforward: while the MRD function $\m\(r\)$ at point $r>0$ measures the expected additional demand, given the current demand $r$, the GMRD function measures the expected additional demand as a percentage of the given current demand. Similarly to the GFR function, the GMRD function has an appealing interpretation from an economic perspective, since it is related to the \emph{price elasticity of expected or mean demand} (PEED), $\varepsilon\(r\)=-r\cdot\frac{\d}{\d r}\ex q\(r\mid\alpha\)/\ex q\(r\mid\alpha\)$ \citep{Xu10}. Specifically,
\begin{equation}\label{eq:elasticity}\e\(r\)=\frac{\m\(r\)}{r}=\(-\frac{-\F\(r\)}{\m\(r\)\vphantom{\tilde\F}\F\(r\)}\cdot r\)^{-1} = \(-r\cdot \frac{\frac{\d}{\d r}\ex \(\alpha-r\)_+}{\vphantom{\tilde\F}\ex\(\alpha-r\)_+}\)^{-1}=\varepsilon^{-1}\(r\)
\end{equation}
which implies that $\e\(r\)$ corresponds to the inverse of the \emph{price elasticity of expected demand} (PEED). Hence, in the current setting, demand distributions with decreasing GMRD, (DGMRD property), are precisely distributions that describe markets with increasing PEED, (IPEED property). This observation ties the economic property of IPEED to the distributional property of DGMRD. Accordingly, we will use the terms DGMRD and IPEED interchangeably.\par
Using \eqref{eq:elasticity}, the FOC in \eqref{derivative} asserts that the supplier's payoff is maximized at the point(s) of unitary elasticity. For an economically meaningful analysis, since realistic problems must have a PEED that eventually becomes greater than $1$ \citep{La06}, we give particular attention to distributions for which $\e\(r\)$ eventually becomes less than $1$, i.e., distributions for which $\bar{r}:=\sup{\{r\ge0:\m\(r\)\ge r\}}$ is finite. Observe that for a nonnegative random demand $\alpha$ with continuous distribution $F$ and finite expectation $\ex \alpha$, $\m\(0\)=\ex \alpha>0$ and hence $\bar{r}>0$. \par
Based on these considerations, it remains to derive conditions that guarantee the existence and uniqueness of an $r^*$ that satisfies the FOC and to show that this $r^*$ indeed corresponds to a maximum of the supplier's revenue function as given in \eqref{eq:supplier}. This is established in \Cref{thm:main} which is the main result of the present Section. 

\begin{theorem}[Equilibrium wholesale prices in the stochastic market]\label{thm:main}
Consider the supplier's maximization problem $\max_{r\ge0}{\Pi^*_s\(r\)}=\lambda_M\cdot \max_{r\ge0}{r\ex\(\alpha-r\)_+}$ and assume that the nonnegative demand parameter, $\alpha$, follows a continuous (non-atomic) distribution $F$ with support $S$ within $L$ and $H$. Then
\begin{itemize}[leftmargin=0cm,itemindent=.5cm,labelwidth=\itemindent,labelsep=0cm, align=left, itemsep=0cm, topsep=0pt]
\item[(a) Necessary condition: ] If an optimal price $r^*$ for the supplier exists, then $r^*$ satisfies the fixed point equation  
\begin{equation}\label{eq:fixed}r^*=\m\(r^*\).\end{equation} 
\item[(b) Sufficient conditions: ] If the generalized mean residual demand (GMRD) function, $\e\(r\):=\m\(r\)/r$, of $F$ is strictly decreasing and $\ex \alpha^2$ is finite, then at equilibrium, the supplier's optimal price $r^*$ exists and is the unique solution of \eqref{eq:fixed}. In this case, $r^*=\ex\alpha/2$, if $\ex\alpha/2 < L$, and $r^* \in \left[L, H\)$, otherwise.
\end{itemize}
\end{theorem}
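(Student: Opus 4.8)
The plan is to handle the two parts in turn, relying on the derivative formula \eqref{derivative}. Recall that, $F$ being non-atomic, $\F$ is continuous, hence $\Pi^*_s$ is differentiable on $\(0,H\)$ and $\m$ is continuous on $[0,H)$ with $\m\(0\)=\ex\alpha$; also $\ex\alpha>0$, since an atomless $F$ with $F\(0\)=0$ cannot have $\alpha=0$ almost surely.

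\textbf{Necessary condition.} First I would argue that any optimal $r^*$ lies in $\(0,H\)$: indeed $\Pi^*_s\(0\)=0$, $\Pi^*_s\(r\)=0$ for all $r\ge H$ when $H<\infty$, whereas $\Pi^*_s\(r\)=\lambda_M r\,\ex\(\alpha-r\)_+\ge\lambda_M r\(\ex\alpha-r\)>0$ for every $0<r<\min\{H,\ex\alpha\}$, using $\(\alpha-r\)_+\ge\alpha-r$. Since $\Pi^*_s$ is differentiable on $\(0,H\)$ with derivative $\lambda_M\(\m\(r\)-r\)\F\(r\)$, the vanishing of this derivative at the interior maximiser $r^*$, together with $\lambda_M>0$ and $\F\(r^*\)>0$, forces $\m\(r^*\)=r^*$, which is \eqref{eq:fixed}.

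\textbf{Sufficient conditions.} Assume now $\e$ is strictly decreasing and $\ex\alpha^2<\infty$. The core is to show that the equation $\e\(r\)=1$ has a solution in $\(0,H\)$; strict monotonicity of $\e$ then yields uniqueness, and since on $\(0,H\)$ the equations $\e\(r\)=1$ and $\m\(r\)=r$ coincide while $r=\m\(r\)$ fails at $r=0$ and for $r\ge H$, such an $r^*$ is the unique solution of \eqref{eq:fixed}. From $\m\(0\)=\ex\alpha>0$ and continuity of $\e$ we get $\e\(r\)\to+\infty$ as $r\downarrow0$. The crux is to produce a point where $\e<1$: if $H<\infty$, monotonicity of $\F$ gives $\m\(r\)\le H-r$ for $r$ close to $H$, so $\e\(r\)\le\(H-r\)/r\to0$; if $H=\infty$, I would invoke the identity $\int_0^\infty\F\(r\)\(\m\(r\)-r\)\,\d r=0$, which holds because both $\int_0^\infty\ex\(\alpha-r\)_+\,\d r$ and $\int_0^\infty r\F\(r\)\,\d r$ equal $\ex\alpha^2/2<\infty$ (by Fubini), so that if $\m\(r\)\ge r$ for all $r$ the nonnegative integrand would have to vanish, giving $\m\(r\)=r$ everywhere and hence $\ex\alpha=\m\(0\)=0$, a contradiction. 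Either way $\e\(r_0\)<1$ for some $r_0<H$, and the intermediate value theorem supplies the desired $r^*$. Finally, rewriting \eqref{derivative} as $\frac{\d\Pi^*_s}{\d r}\(r\)=\lambda_M r\F\(r\)\(\e\(r\)-1\)$, strict monotonicity of $\e$ makes it positive on $\(0,r^*\)$ and negative on $\(r^*,H\)$, while $\Pi^*_s\equiv0$ on $[H,\infty)$; hence $r^*$ is the unique global maximiser.

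\textbf{Location of $r^*$ and the main obstacle.} Since $\F\equiv1$ on $[0,L]$, we have $\m\(r\)=\ex\alpha-r$ there. If $\ex\alpha/2<L$, then $r=\ex\alpha/2$ already satisfies $\m\(r\)=r$, so by uniqueness $r^*=\ex\alpha/2$. Otherwise, $r^*<L$ would give $\ex\alpha-r^*=r^*$, i.e.\ $r^*=\ex\alpha/2\ge L>r^*$, impossible; hence $r^*\ge L$, and with $r^*<H$ we obtain $r^*\in[L,H)$. I expect the genuinely delicate step to be the one showing that $\e$ eventually drops below $1$ when $H=\infty$ (equivalently, that $\bar r<\infty$), which is exactly where finiteness of $\ex\alpha^2$ is indispensable --- without it $\e$ can remain above $1$ and no optimal price exists, for instance when $\F\(r\)=\(1+r\)^{-2}$ --- whereas the remaining steps are routine calculus and elementary properties of $\m$.
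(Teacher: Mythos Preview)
Your proof is correct and follows the paper's overall strategy: analyze the sign of \eqref{derivative}, show that $\e$ starts above $1$ and eventually drops below $1$, and use strict monotonicity for uniqueness and for identifying the critical point as a global maximum. The one genuine difference is your handling of the unbounded case $H=\infty$. The paper disposes of this step by citing a companion paper for the fact that, under DGMRD, $\lim_{r\to\infty}\e\(r\)<1$ if and only if $\ex\alpha^2<\infty$. You instead give a self-contained argument via the identity $\int_0^\infty\F\(r\)\(\m\(r\)-r\)\,\d r=0$, obtained from $\int_0^\infty\ex\(\alpha-r\)_+\,\d r=\int_0^\infty r\F\(r\)\,\d r=\ex\alpha^2/2$ by Fubini, and then argue that $\m\(r\)\ge r$ everywhere would force $\m\equiv\mathrm{id}$ and hence $\ex\alpha=\m\(0\)=0$. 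This is more elementary and, notably, does not use the DGMRD hypothesis at this step at all---finiteness of the second moment alone produces a point with $\e<1$---so in your version DGMRD enters only for uniqueness. Your treatment of the location of $r^*$ relative to $L$ is also marginally cleaner: you verify directly that $\ex\alpha/2$ solves $\m\(r\)=r$ when $\ex\alpha/2<L$ and invoke uniqueness, whereas the paper first argues $r^*<L$ via DGMRD and then solves.
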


\begin{proof}
(a) Since $\F\(r\)>0$ for $0<\alpha<H$, the sign of the derivative $\frac{\d\Pi^*_s}{\d r}\(r\)$ is determined by the term $\m\(r\)-r$ and any critical point $r^*$ satisfies $\m\(r^*\)=r^*$. Hence, the necessary part of the theorem is obvious from \eqref{derivative} and the continuity of $\frac{\d\Pi^*_s}{\d r}\(r\)$. (b) For the sufficiency part, it remains to check that such a critical point exists and corresponds to a maximum under the assumptions that $\e\(r\)$ is strictly decreasing and $\ex \alpha^2<\infty$. Clearly, $\m\(r\)-r$ is continuous and $\lim_{r\to 0_+}\m\(r\)-r=\ex\alpha>0$. Hence, $\us$ starts increasing on $\(0,H\)$. However, the limiting behavior of $\m\(r\)-r$ and hence of $\frac{\d\Pi^*_s}{\d r}\(r\)$ as $r$ approaches $H$ from the left, may vary depending on whether $H$ is finite or not. If $H$ is finite, i.e., if the support of $\alpha$ is bounded, then $\lim_{r\to H-}\(\m\(r\)-r\)=-H$. Hence, $\e\(r\)$ eventually becomes less than 1 and a critical point $r^*$ that corresponds to a maximum exists without any further assumptions. Strict monotonicity of $\e\(r\)$ implies that this $r^*$ is unique. If $H=\infty$, then an optimal solution $r^*$ may not exist because the limiting behavior of $\m\(r\)$ as $r\to\infty$ may vary, see \Cref{pareto} or \cite{Br03}. In this case, the condition of finite second moment ensures that $\bar{r}<\infty$. In particular, as shown in \cite{Le18}, if the GMRD function $\e\(r\)$ of a random variable $\alpha$ with unbounded support is decreasing, then $\lim_{r\to\infty}\e\(r\)<1$ if and only if $\ex \alpha^2$ is finite. This establishes existence. Uniqueness follows again from strict monotonicity of $\e\(r\)$ which precludes intervals of the form $\m\(r\)=r$ that give rise to multiple optimal solutions. \par
To prove the second claim of the sufficiency part, note that $\ex\alpha<2L$ is equivalent to $\m\(L\)< L$. Then, the DGMRD property implies that $\m\(r\)<r$ for all $r>L$, hence $r^*< L$. In this case, $\m\(r^*\)=\ex\alpha-r^*$ and hence $r^*$ is given explicitly by $r^*=\ex\alpha/2$, which may be compared with the optimal $r^*$ of the complete information case. On the other hand, if $\ex\alpha \ge 2L$, then for all $r<L$, $\m\(r\)=\ex\alpha-r\ge 2L-r>r$ which implies that $r^*$ must be in $\left[L,H\)$.
\end{proof}

The economic interpretation of the sufficiency conditions in part (b) of \Cref{thm:main} is immediate. By \eqref{eq:elasticity}, demand distributions with the DGMRD property are precisely distributions that exhibit \emph{increasing PEED} (IPEED property). In turn, finiteness of the second moment is required to ensure that the expected demand will eventually become elastic, even in the case of unbounded support, see \cite{Le18}, Theorem 3.2. Thus, part (b) characterizes in terms of their mathematical properties demand distributions that model linear markets with monotone and eventually elastic expected demand. These conditions apply to distributions that may neither be absolutely continuous (do not possess a density) nor have a connected support. 

\begin{remark}\label{strict} In the statement of \Cref{thm:main}, strict monotonocity can be relaxed to weak monotonicity without significant loss of generality. This relies on the explicit characterization of distributions with MRD functions that contain linear segments which is given in Proposition 10 of \cite{Ha81}. Namely, $\m\(r\)=r$ on some interval $J=[a,b]\subseteq S$ if and only if $\F\(r\)r^2=\F\(a\)a^2$ for all $r\in J$. If $J$ is unbounded, this implies that $\alpha$ has the Pareto distribution on $J$ with scale parameter $2$. In this case, $\ex \alpha^2=\infty$, see \Cref{pareto}, which is precluded by the requirement that $\ex\alpha^2<\infty$. Hence, to replace strict by weak monotonicity -- but still retain equilibrium uniqueness -- it suffices to exclude distributions that contain intervals $J=[a,b]\subseteq S$ with $b<\infty$ in their support, for which $\F\(r\)r^2=\F\(a\)a^2$ for all $r\in J$.
\end{remark}

\begin{example}[Pareto distribution]\label{pareto} The Pareto distribution is the unique distribution with constant GMRD and GFR functions over its support. Let $\alpha$ be Pareto distributed with pdf $f\(r\)=kL^kr^{-\(k+1\)}\mathbf{1}_{\{L\le r\}}$, and parameters $0<L$ and $k > 1$ (for $0<k\le 1$ we get $\ex\alpha=\infty$, which contradicts the basic assumptions of our model). To simplify, let $L=1$, so that $f\(r\)=k r^{-k-1}\mathbf{1}_{\{1 \leq r < \infty\}}$, $F\(r\) = \(1 - r^{-k}\)1_{\{1 \leq r < \infty\}}$, and $\ex\alpha=\frac{k}{k-1}$. The mean residual demand of $\alpha$ is given by $\m\(r\)=\frac{r}{k-1}+\frac{k}{k-1}\(1-r\)\mathbf 1_{\{0\le r<1\}}$ and, hence, is decreasing on $\left[0, 1\)$ and increasing on $\left[1, \infty\)$. However, the GMRD function $\e\(r\)=\m\(r\)/r$ is decreasing for $0<r<1$ and is constant thereafter, hence, $\alpha$ is DGMRD. Similarly, for $1\le r$ the failure (hazard) rate $\h\(r\)=kr^{-1}$ is decreasing, but the generalized failure rate $\g\(r\)=k$ is constant and, hence, $\alpha$ is IGFR. The payoff function of the supplier is 
\begin{equation*}\us=\lambda_Mr \, \m\(r\)\F\(r\)=\frac{\lambda_M}{\(k-1\)}\begin{cases}r \(r-rk+k\) & \mbox{if $0 \leq r < 1$}\\
r^{2-k} & \mbox{if $r \ge 1$}, \end{cases} \end{equation*}
which diverges as $r\to \infty$, for $k < 2$ and remains constant for $k=2$. In particular, for $k\le 2$, the second moment of $\alpha$ is infinite, i.e., $\ex \alpha^2=\infty$, which shows that for DGMRD distributions, the assumption that the second moment of $F$ is finite may not be dropped for part (b) of \Cref{thm:main} to hold. On the other hand, for $k > 2$, we get $r^* = \frac{k}{2\(k-1\)}$ as the unique optimal wholesale price, which is indeed the unique fixed point of $\m\(r\)$.
\end{example}

\subsection{General case with \texorpdfstring{$n$}{j} identical retailers}\label{subn}
To ease the exposition, we restricted our presentation to $n=2$ identical retailers. However, the present analysis applies to arbitrary number $n\ge 1$ of symmetric retailers for all competition-structures that give rise to a unique second-stage equilibrium in which the aggregate ordered quantity depend on $\alpha$ via the term $\(\alpha-r\)_+$ as in \Cref{tab:competition}. This relies on the fact, that in such markets, the total quantity that is ordered from the supplier depends on $n$ only up to a scaling constant. Thus, the approach to the supplier's expected profit maximization in the first-stage remains the same independently of the number of second-stage retailers. To avoid unnecessary notation, we present the general case for the classic Cournot competition. \par
Formally, let $N=\{1,2, \ldots, n\}$, with $n\ge 1$ denote the set of symmetric retailers. A strategy profile (retailers' orders from the supplier) is denoted by $\mathbf q=\(q_1, q_2, \ldots, q_n\)$ with $q=\sum_{j=1}^nq_j$ and $q_{-i}:=q-q_i$. Assuming linear inverse demand function $p=\alpha-\beta q$, the payoff function of retailer $i$, for $i\in N$, is given by $\Pi_i\(q, q_{-i}\)=q_i\(p-r\)$. Under these assumptions, the second stage corresponds to a linear Cournot oligopoly with constant marginal cost, $r$. Hence, each retailer's equilibrium strategy, $q^*_i\(r\)$, is given by $q^*_i\(r\)= \frac{1}{\beta\(n+1\)}\(\alpha-r\)_+$, for $r\ge 0$. Accordingly, in the first stage, the supplier's expected revenue function on the equilibrium path is given by $\Pi^*_s\(r\)=rq^*\(r\)=\frac{n}{\beta\(n+1\)}r\ex\(\alpha-r\)_+$. Hence, it is maximized again at $r^*\(\alpha\)=\alpha/2$ if the supplier knows $\alpha$ or at $r^*=\m\(r^*\)$ if the supplier only knows the distribution $F$ of $\alpha$. Based on the above, the number of second-stage retailers affects the supplier's revenue function only up to a scaling constant and \Cref{thm:main} is stated unaltered for any $n\ge 1$.

\section{Comparative Statics}\label{sec:statics}
The main implication of the closed form expression of the supplier's optimal price in terms of the MRD function (equation  \eqref{eq:fixed}) is that it facilitates a comparative statics analysis via the rich theory of \emph{stochastic orders} \citep{Sh07,Lax06,Be16}. Since the equilibrium quantity and price $q^*,p^*$ are both monotone in the wholesale price $r^*$, our focus will be on $r^*$ as the demand distribution characteristics vary. To obtain a meaningful comparison between different market instances (i.e., instances of the same market that correspond to different demand distributions), we assume throughout equilibrium uniqueness and hence, unless stated otherwise, we consider only distributions for which \Cref{thm:main} applies\footnote{Since the DGMRD property is satisfied by a very broad class of distributions, see \cite{Ba13}, \cite{Ko11} and \cite{Le18}, we do not consider this as a significant restriction. Still, since it is sufficient (together with finitenes of the second moment) but not necessary for the existence of a unique optimal price, the analysis naturally applies to any other distribution that guarantees equilibrium existence and uniqueness.}. First, we introduce some additional notation. \par
Let $X_1\sim F_1,X_2\sim F_2$ be two nonnegative random variables -- or equivalently demand distributions -- with supports between $L_1$ and $H_1$ and $L_2$ and $H_2$, respectively (cf. definition of $L$ and $H$ in \Cref{sec:model}) and MRD functions $\m_1\(r\)$ and $\m_2\(r\)$. We say that $X_1$ is less than $X_2$ in the \emph{mean residual demand} order, denoted by $X_1\mrl X_2$\footnote{In reliability applications, the MRD-order is commonly known as the mean residual life (MRL)-order, \cite{Sh07}.}, if $\m_1\(r\)\le \m_2\(r\)$ for all $r\ge0$. This order plays a key role in the present model. Specifically, by \eqref{eq:elasticity}, we have that $\m_1\(r\)\le \m_2\(r\)$ for any $r\ge0$ if and only if $\varepsilon_2\(r\)\le \varepsilon_1\(r\)$ for any $r\ge0$, i.e., if and only if the price elasticity of expected demand in market $X_2$ is less than the price elasticity of expected demand in market $X_1$ for any wholesale price $r\ge0$. This motivates the following definition.

\begin{definition}\label{def:elastic}
We will say that market $X_2$ is \emph{less elastic} than market $X_1$, denoted by $X_2\el X_1$, if $\varepsilon_2\(r\)\le \varepsilon_1\(r\)$ for every $r\ge0$. Based on the above, $X_2\el X_1$ if and only if $X_1\mrl X_2$. 
\end{definition}
Using this notation, the following Lemma captures the importance of the characterization of the optimal price via the fixed point equation \eqref{eq:fixed}. 

\begin{lemma}\label[lemma]{lem:technical} Let $X_1\sim F_1, X_2\sim F_2$ be two nonnegative, continuous and strictly DGMRD demand distributions with finite second moments. If $X_2$ is less elastic than $X_1$, then the supplier's optimal wholesale price is lower in market $X_1$ than in market $X_2$. In short, if $X_2\el X_1$, then $r^*_1\le r^*_2$. 
\end{lemma}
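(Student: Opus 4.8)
The plan is to lean on the fixed-point characterization of the optimal price established in \Cref{thm:main}, combined with the fact that the hypothesis $X_2\el X_1$ is, by the discussion immediately preceding the lemma, exactly the pointwise inequality $\m_1\(r\)\le\m_2\(r\)$ for all $r\ge0$. Since both $F_1$ and $F_2$ are nonnegative, continuous, strictly DGMRD with finite second moment, \Cref{thm:main} applies to each: for $i\in\{1,2\}$ the optimal wholesale price $r^*_i$ exists, is strictly positive (because $\m_i\(0\)=\ex\alpha>0$ makes $\frac{\d\Pi^*_s}{\d r}>0$ near $0$, so the maximizer is interior), and is the \emph{unique} solution of $r=\m_i\(r\)$. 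Dividing by $r>0$, this says $r^*_i$ is the unique point where the strictly decreasing GMRD function $\e_i\(r\)=\m_i\(r\)/r$ crosses the level $1$; hence $\e_i\(r\)>1$ for $0<r<r^*_i$ and $\e_i\(r\)<1$ for $r>r^*_i$.

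The heart of the argument is then a one-line comparison evaluated at the smaller fixed point $r^*_1$. From $\m_1\le\m_2$ we get $\m_2\(r^*_1\)\ge\m_1\(r^*_1\)=r^*_1>0$. The strict positivity of $\m_2\(r^*_1\)$ forces $r^*_1<H_2$ (otherwise $\m_2\(r^*_1\)=0$), so $\e_2\(r^*_1\)=\m_2\(r^*_1\)/r^*_1$ is well defined and lies in the region where $\e_2$ is strictly decreasing, and it satisfies $\e_2\(r^*_1\)\ge1=\e_2\(r^*_2\)$. Strict monotonicity of $\e_2$ then immediately yields $r^*_1\le r^*_2$, which is the assertion of the lemma.

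I do not anticipate a genuine obstacle; the only points that deserve a word of care are (i) that the fixed-point equation $r^*_i=\m_i\(r^*_i\)$ from \Cref{thm:main} holds uniformly, including the boundary regime $r^*_i=\ex\alpha/2<L_i$ where $\m_i\(r\)=\ex\alpha-r$, so no separate casework is needed, and (ii) the verification that $r^*_1$ falls in the range where $\e_2$ is decreasing, which is exactly what $\m_2\(r^*_1\)\ge r^*_1>0$ supplies. As an alternative phrasing that avoids the GMRD function altogether, one can argue directly from \eqref{derivative}: at $r=r^*_1$ one has $\frac{\d\Pi^*_s}{\d r}\(r^*_1\)=\lambda_M\(\m_2\(r^*_1\)-r^*_1\)\F_2\(r^*_1\)\ge0$ with $\F_2\(r^*_1\)>0$, and since the supplier's revenue function for market $X_2$ is unimodal with peak at $r^*_2$ (again by \Cref{thm:main}), a nonnegative derivative at $r^*_1$ forces $r^*_1\le r^*_2$.
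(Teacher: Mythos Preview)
Your argument is correct and is essentially the paper's own proof: translate $X_2\el X_1$ into $\e_1\le\e_2$, evaluate at the fixed point $r^*_1$ to get $\e_2\(r^*_1\)\ge1=\e_2\(r^*_2\)$, and invoke the strict DGMRD property of $X_2$ to conclude $r^*_1\le r^*_2$. Your extra care about $r^*_1<H_2$ and the alternative derivative-based phrasing are nice additions, but the core idea is identical.
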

\begin{proof} By definition, $X_2\el X_1$ implies that $\varepsilon_2\(r\)\le \varepsilon_1\(r\)$ for every $r\ge0$ which by \eqref{eq:elasticity} is equivalent to $\e_1\(r\)\le \e_2\(r\)$ for all $r\ge0$. Hence, by \eqref{eq:fixed}, $1=\e_1\(r_1^*\)\le \e_2\(r_1^*\)<\e_2\(r\)$ for all $r<r^*_1$, where the second inequality follows from the assumption that $X_2$ is strictly DGMRD. Since $\e_2\(r^*_2\)=1$, it follows that $r^*_1\le r^*_2$.
\end{proof}
\Cref{lem:technical} states that the supplier charges a higher price in a \emph{less elastic} market. Although trivial to prove once \Cref{thm:main} has been established, it is the key to the comparative statics analysis in the present model. Indeed, combining the above, the task of comparing the optimal wholesale price $r^*$ for varying demand distribution parameters -- such as market size or demand variability -- essentially reduces to comparing demand distributions (market instances) in terms of their elasticities or equivalently in terms of their MRD functions. Such conditions can be found in \cite{Sh07} and \cite{Be13} and provide the framework for the subsequent analysis.

\subsection{Transformations that preserve the MRD-order}
\Cref{lem:technical} provides a natural starting point to study the response of the equilibrium wholesale price, $r^*$, to changes in the demand distribution. In particular, if a change in the demand distribution preserves the $\mrl$-order, then \Cref{lem:technical} readily implies that this change will also preserve the order of wholesale prices. Specifically, let $X_1\sim F_1,X_2\sim F_2$ denote two different demand distributions, such that $X_1\mrl X_2$. In this case, we know by \Cref{lem:technical} that $r^*_1\le r^*_2$. We are interested in determining transformations of $X_1, X_2$ that preserve the $\mrl$-order and hence the ordering $r^*_1\le r^*_2$. Unless otherwise stated, we assume that the random demand is such that it satisfies the sufficiency conditions of \Cref{thm:main} and hence that the supplier's optimal wholesale price exists and is unique.
\begin{theorem}\label{thm:transform} Let $X_1\sim F_1,X_2\sim F_2$ denote two nonnegative, continuous and strictly DGMRD demand distributions, with finite second moments, such that $X_1\mrl X_2$.
\begin{enumerate}[label=(\roman*), itemsep=0cm]
\item If $Z$ is a nonnegative, IFR distribution, independent of $X_1$ and $X_2$, then $\mathbf{r^*_{X_1+Z}}\le \mathbf{r^*_{X_2+Z}}$.
\item If $\phi$ is an increasing, convex function, then $r^*_{\phi\(X_1\)}\le r^*_{\phi\(X_2\)}$. 
\item If $X_p\sim p F_1+\(1-p\)F_2$ for some $p\in \(0,1\)$, then $r^*_{X_1}\le \mathbf{r^*_{X_p}}\le r^*_{X_2}$.
\end{enumerate}
\end{theorem}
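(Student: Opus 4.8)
The plan is to reduce all three parts to \Cref{lem:technical}: since $X_1\mrl X_2$ is the same as $X_2\el X_1$, the ordering of the (equilibrium) wholesale prices follows as soon as one knows that the transformation in question preserves the $\mrl$-order, after which it only remains to translate that into a statement about the --- possibly set-valued --- fixed-point sets of the corresponding MRD functions. So each part decomposes into a closure property of $\mrl$ followed by an application of \Cref{lem:technical} or of its refinement to fixed-point sets.

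For part (i) I would invoke the closure of the $\mrl$-order under convolution with an IFR random variable --- a preservation property of the same type as Theorem 2.A.11 of \cite{Sh07} used in the proof of \Cref{thm:size} --- to get $X_1+Z\mrl X_2+Z$. Since $Z$ is IFR it has finite moments of every order, so $X_1+Z$ and $X_2+Z$ have finite second moment; this forces $\bar r:=\sup\{r:\m\(r\)\ge r\}$ to be finite for each of them and, by continuity of $\m\(r\)-r$, to be itself a fixed point of $\m$, so the equilibrium sets are nonempty. I would then rerun the single-crossing bookkeeping of \Cref{lem:technical} at the level of fixed-point sets: $\m_{X_1+Z}\le\m_{X_2+Z}$ pointwise puts the largest fixed point of $\m_{X_1+Z}$ below that of $\m_{X_2+Z}$ and, symmetrically, the smallest below the smallest --- upgrading this to the full set inequality $\mathbf{r^*_{X_1+Z}}\le\mathbf{r^*_{X_2+Z}}$ is the delicate point, discussed below.

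Part (ii) goes the same way: $X_1\mrl X_2$ and $\phi$ increasing and convex give $\phi\(X_1\)\mrl\phi\(X_2\)$ by the closure of $\mrl$ under increasing convex maps (\cite{Sh07}), and \Cref{lem:technical} then applies to $\phi\(X_1\),\phi\(X_2\)$ --- which, as throughout \Cref{sec:statics}, we take to again satisfy the sufficiency conditions of \Cref{thm:main} --- to give $r^*_{\phi\(X_1\)}\le r^*_{\phi\(X_2\)}$. Part (iii) I would do by hand. Writing $\F_p=p\F_1+\(1-p\)\F_2$, a one-line computation gives
\[\m_{X_p}\(t\)=\frac{p\,\F_1\(t\)\,\m_1\(t\)+\(1-p\)\,\F_2\(t\)\,\m_2\(t\)}{p\,\F_1\(t\)+\(1-p\)\,\F_2\(t\)},\]
so $\m_{X_p}\(t\)$ is a convex combination of $\m_1\(t\)$ and $\m_2\(t\)$; from $\m_1\le\m_2$ pointwise this yields $\m_1\(t\)\le\m_{X_p}\(t\)\le\m_2\(t\)$ for all $t$, i.e.\ $X_1\mrl X_p\mrl X_2$. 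Since $X_p$ has finite second moment (again a convex combination), its fixed-point set is nonempty, and since $X_1$ and $X_2$ are \emph{strictly} DGMRD the one-sided single-crossing argument of \Cref{lem:technical} shows that every fixed point of $\m_{X_p}$ lies above $r^*_{X_1}$ and below $r^*_{X_2}$, which is exactly $r^*_{X_1}\le\mathbf{r^*_{X_p}}\le r^*_{X_2}$.

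The step I expect to be the main obstacle is the multiplicity bookkeeping in the markets that need not be DGMRD. In part (iii) this is harmless because the two strictly DGMRD distributions bracket the mixture from the outside, so \Cref{lem:technical}'s argument transfers verbatim on each side. In part (i), by contrast, neither $X_1+Z$ nor $X_2+Z$ need be DGMRD, and the pointwise domination $\m_{X_1+Z}\le\m_{X_2+Z}$ does not by itself keep the two fixed-point sets from interleaving; the crux will be to use the sign of $\m_{X_i+Z}\(r\)-r$ near $r=0$ together with the structure inherited from the convolution to show that $\m_{X_i+Z}\(r\)-r$ changes sign only once --- so that each such market has in fact a unique equilibrium --- or else to argue directly with the extreme fixed points that $\sup\mathbf{r^*_{X_1+Z}}\le\inf\mathbf{r^*_{X_2+Z}}$.
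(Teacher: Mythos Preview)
Your approach is essentially the paper's: each part is reduced to a preservation property of the $\mrl$-order from \cite{Sh07} (the paper cites Lemma~2.A.8 for (i) and Theorem~2.A.19 for (ii) and (iii)) followed by \Cref{lem:technical}; your explicit convex-combination computation in (iii) is just an elementary substitute for the cited result, and for (ii) the paper notes---as you implicitly assume---that the DGMRD class is closed under increasing convex maps, so uniqueness is retained. The multiplicity concern you flag in (i) is genuine, but the paper does not resolve it either: its proof simply records that the setwise notation is necessary and cites the preservation lemma, so on that point you have in fact been more careful than the paper itself.
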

\begin{proof} Part (i) follows from Lemma 2.A.8 of \cite{Sh07}. Since the resulting distributions $X_i+Z, i=1,2$ may not be DGMRD nor DMRD, the setwise notation is necessary. Part (ii) follows from Theorem 2.A.19 (ibid). Equilibrium uniqueness is retained in the transformed markets, $\phi\(X_i\), i=1,2$, since the DGMRD class of distributions is closed under increasing, convex transformations, see \cite{Le18}. Finally, part (iii) follows from Theorem 2.A.19. However, the DGMRD class is not closed under mixtures and hence, in this case, the $X_p$ market may have multiple equilibria, which necessitates, as in part (i), the setwise statement for the wholesale equilibrium prices of the $X_p$ market.
\end{proof}
\begin{remark}
\cite{Mu90} show that the strict $\mrl$-order -- i.e., if the inequality $\m_1\(r\)<\m_2\(r\)$ is strict for all $r$ -- is closed under monotonically non-decreasing transformations and closed in a reversed sense under monotonically non-increasing transformations. The $\mrl$-order is also closed under convolutions, provided that the convoluting distribution has log-concave density (as is the case with many commonly used distributions, \cite{Ba05}), \cite{Mu92}. Finally, if instead of $X_1\mrl X_2$, $X_1$ and $X_2$ are ordered in the stronger \emph{hazard rate} order, i.e., if $\h_1\(r\)\le \h_2\(r\)$ for all $r\ge0$, denoted by $X_1\hr X_2$, then part (i) of \Cref{thm:transform} remains true by Lemma 2.A.10 of \cite{Sh07}, even if $Z$ is merely DMRD (instead of IFR). 
\end{remark}

\subsection{Market size}\label{size}
Next, we turn to demand transformations that intuitively correspond to larger market instances. Again, to avoid unnecessary technical complications, we will restrict attention to demand distributions which satisfy the sufficiency conditions of \Cref{thm:main} (e.g., DMRD or DGMRD distributions).

\subsubsection{Stochastically larger markets}\label{sub:larger}
Our first finding in this direction is that stochastically larger markets do not necessarily lead to higher wholesale prices. Technically, this follows from the fact that the stochastic order does not imply (nor is implied by) the $\mrl$-order \citep{Sh07}. In particular, \Cref{lem:technical} demonstrates that the correct criterion to order prices is the elasticity rather than the size of different market instances. This provides a theoretical explanation for the intuition of \cite{La01} that \qt{size is not everything} and that prices are driven by different forces. \par
Formally, let $X\sim F, Y\sim G$ denote two market instances. If $\G\(r\)\le \F\(r\)$ for all $r\ge0$, then $Y$ is said to be less than $X$ in the usual \emph{stochastic order}, denoted by $Y\st X$. It is immediate that $Y\st X$ implies $\ex Y\le \ex X$. The following example, adapted from \cite{Sh91}, shows that wholesale prices can be lower in a stochastically larger market instance. Specifically, let $X\sim F$ be uniformly distributed on $[0,1]$ and let $Y\sim G$ have a piecewise linear distribution with $G\(0\)=0, G\(1/3\)=7/9, G\(2/3\)=7/9$ and $G\(1\)=1$. Then, as shown in \Cref{stochuni}, $Y\st X$ (right panel) but $r^*_X\le r^*_Y$ (left panel).
\begin{figure*}[htp!]
\centering
\includegraphics[width=\linewidth]{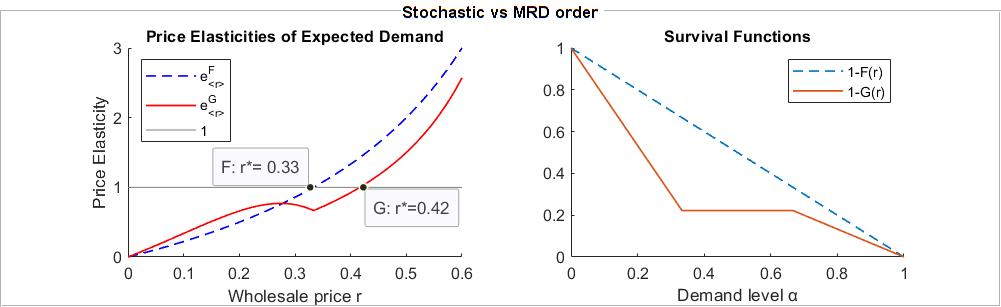}
\caption{$F$ stochastically dominates $G$ (right panel), yet $r^*_F<r^*_G$ (left panel). The optimal prices $r^*_F$ and $r^*_G$ are at the points of unitary elasticity, i.e., at the intersection of the price elasticities curves with the horizontal line at $1$.}
\label{stochuni}
\end{figure*}

\subsubsection{Reestimating demand} 
The above example suggests that the statement \emph{larger markets lead to higher prices} cannot be obtained in full generality. This brings us to the main part of this section which is to investigate conditions under which an increase in market demand leads to an increase in optimal prices. Formally, let $X$ denote the random demand in an instance of the market under consideration. Let $c\ge1$ denote a positive constant and $Z$ an additional random source of demand that is independent of $X$. Moreover, let $r^*_{X}$ denote the equilibrium wholesale price in the initial market and $r^*_{cX}, r^*_{X+Z}$ the equilibrium wholesale prices in the markets with random demand $cX$ and $X+Z$ respectively. How does $r^*_{X}$ compare to $r^*_{cX}$ and $r^*_{X+Z}$?\par
While the answer for $r^*_{cX}$ is rather straightforward, see \Cref{thm:size} below, the case of $X+Z$ is more complicated. Specifically, since DGMRD random variables are not closed under convolution, see \cite{Le18}, the random variable $X+Z$ may not be DGMRD. This may lead to multiple equilibrium wholesale prices in the $X+Z$ market, irrespectively of whether $Z$ is DGMRD or not. To deal with the possible multiplicity of equilibria, we will write $\mathbf {r^*_{W}}:=\left\{r: r=\m_{W}\(r\)\right\}$ to denote the set of all possible equilibrium wholesale prices. Here, $\m_{W}$ denotes the MRD function of a $W\sim F_W$ demand distribution, e.g., $W:=X+Z$. To ease the notation, we will also write $\mathbf{r^*_W}\le\mathbf{r^*_V}$, when all elements of the set $\mathbf{r^*_W}$ are less or equal than all elements of the set $\mathbf{r^*_V}$.\par
\Cref{thm:size} 
conforms wite prices are always higher in the larger $cX$ market and under some additional conditions also in the $X+Z$ market.
\begin{theorem}\label{thm:size} Let $X\sim F$ be a nonnegative and continuous demand distribution with finite second moment.
\begin{enumerate}[label=(\roman*), noitemsep]
\item If $X$ is DGMRD and $c\ge 1$ is a positive constant, then $r^*_{X}\le r^*_{cX}$.
\item If $X$ is DMRD and $Z$ is a nonnegative, continuous demand distribution with finite second moment and independent of $X$, then $r^*_{X}\le \mathbf{r^*_{X+Z}}$, i.e., $r^*_X\le r^*_{X+Z}$ for any equilibrium wholesale price $r^*_{X+Z}$ of the $X+Z$ market.
\end{enumerate}
\end{theorem}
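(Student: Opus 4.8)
The plan is to reduce both claims, via \Cref{lem:technical}, to mean-residual-demand comparisons; the substance lies in establishing the relevant $\mrl$-relations.

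\textbf{Part (i).} From \eqref{eq:mrl} a change of variables gives $\m_{cX}\(r\)=c\,\m_X\(r/c\)$ for every $r\ge0$ (both sides vanishing for $r\ge cH$), hence $\e_{cX}\(r\)=\e_X\(r/c\)$. In particular $cX$ is again (strictly) DGMRD and $\ex\(cX\)^2=c^2\ex X^2<\infty$, so \Cref{thm:main} applies to it. Since $c\ge1$ we have $r/c\le r$, and the DGMRD property gives $\e_X\(r/c\)\ge\e_X\(r\)$, i.e. $\m_{cX}\(r\)\ge\m_X\(r\)$ for all $r$; equivalently $X\mrl cX$, i.e. $cX\el X$. \Cref{lem:technical} then yields $r^*_X\le r^*_{cX}$. (Equivalently and more directly: evaluating at $r=r^*_X$ gives $\e_{cX}\(r^*_X\)=\e_X\(r^*_X/c\)\ge\e_X\(r^*_X\)=1=\e_{cX}\(r^*_{cX}\)$, and monotonicity of $\e_{cX}$ forces $r^*_X\le r^*_{cX}$.)

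\textbf{Part (ii).} Here the obstacle is the one flagged before the statement: $X+Z$ need not be DGMRD, so \Cref{thm:main} does not apply and the conclusion must be proved for every element of $\mathbf{r^*_{X+Z}}$. I would first establish $X\mrl X+Z$ and then translate it into the fixed-point statement. For the comparison, I extend $\m_X$ to negative arguments by $\m_X\(t\):=\ex X-t$ for $t<0$, so that $\m_{X+z}\(r\)=\m_X\(r-z\)$ for every deterministic shift $z\ge0$ and every $r$; the DMRD hypothesis is then exactly the statement that this extended $\m_X$ is nonincreasing on $\(-\infty,H\)$, equivalently that $\F_X\(r\)\int_{r-z}^{\infty}\F_X\(u\)\du\ge\F_X\(r-z\)\int_{r}^{\infty}\F_X\(u\)\du$ for all $r$ and all $z\ge0$, i.e. $\m_X\(r\)\le\m_{X+z}\(r\)$. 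Since $\F_{X+Z}\(r\)=\ex_Z[\F_X\(r-Z\)]$ and $\int_r^{\infty}\F_{X+Z}\(u\)\du=\ex_Z[\int_r^{\infty}\F_X\(u-Z\)\du]$ by Tonelli, integrating the pointwise inequality in $Z$ shows that $\m_{X+Z}\(r\)-\m_X\(r\)$ has the sign of a nonnegative expectation, whence $X\mrl X+Z$. (Alternatively, this closure of the mrl-order under addition of an independent summand when the first variable is DMRD can be quoted from \cite{Sh07} or obtained from the criteria in \cite{Be13}.)

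To finish, note that a DMRD distribution is automatically strictly DGMRD — $\m_X\(r\)/r$ cannot be constant on an interval, since that would force $\m_X$ to increase linearly there — so by \Cref{thm:main} the price $r^*_X$ is the unique fixed point of $\m_X$, and $\e_X$ is strictly decreasing, passing through the value $1$ at $r^*_X$. For any $\rho\in\mathbf{r^*_{X+Z}}$ we then have $\rho=\m_{X+Z}\(\rho\)\ge\m_X\(\rho\)$, so $\e_X\(\rho\)\le1=\e_X\(r^*_X\)$, and strict monotonicity of $\e_X$ forces $\rho\ge r^*_X$; hence $r^*_X\le\mathbf{r^*_{X+Z}}$. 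The main obstacle, and the only place real care is needed, is the convolution step: making the conditioning and Tonelli manipulation rigorous, justifying the extension of $\m_X$ to negative arguments, and disposing of the degenerate cases $\F_X\(r\)=0$ or $\F_{X+Z}\(r\)=0$ (where $\m_X\(r\)=\m_{X+Z}\(r\)=0$ and the claim is trivial). Everything else is a formal consequence of \Cref{thm:main} and \Cref{lem:technical}.
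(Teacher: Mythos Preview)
Your proposal is correct and follows essentially the same route as the paper. Part (i) is identical: both you and the paper use $\m_{cX}(r)=c\,\m_X(r/c)$ together with the DGMRD property to get $X\mrl cX$, then invoke \Cref{lem:technical}. For part (ii), the paper simply cites Theorem~2.A.11 of \cite{Sh07} for the closure $X\mrl X+Z$ under DMRD, whereas you sketch a direct Tonelli argument (and also note the citation); your final fixed-point step, $\rho=\m_{X+Z}(\rho)\ge\m_X(\rho)\Rightarrow\e_X(\rho)\le1\Rightarrow\rho\ge r^*_X$, is the contrapositive of the paper's phrasing that $r<r^*_X$ forces $r<\m_X(r)\le\m_{X+Z}(r)$, so the two are equivalent.
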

\begin{proof} The proof of part (i) follows directly from the preservation property of the $\mrl$-order that is stated in Theorem 2.A.11 of \cite{Sh07}. Specifically, since $\m_{cX}\(r\)=c\m_X\(r/c\)$ is the MRD function of $cX$, we have that $\m_{cX}\(r\)=r\cdot\frac{\m_X\(r/c\)}{r/c}=r\cdot\e\(r/c\)\ge r\cdot\e\(r\)=\m_X\(r\)$, for all $r>0$, with the inequality following from the assumption that $X$ is DGMRD. Hence, $X \mrl cX$ or equivalently $cX\el X$, cf. \Cref{def:elastic}, which by \Cref{lem:technical} implies that $r^*_X\le r^*_{cX}$. \par
Part (ii) follows from Theorem 2.A.11 of \cite{Sh07}. The proof necessitates that $X$ is DMRD and hence requiring that $X$ is merely DGMRD is not enough. Since, $X$ is DMRD, we know that $r<\m_X\(r\)$ for all $r<r^*_{X}=\m_X\(r^*_{X}\)$. Together with $X\mrl X+Z$, this implies that $r<\m_X\(r\)\le \m_{X+Z}\(r\)$, for all $r<r^*_{X}$.  Hence, $\mathbf {r^*_{X+Z}}\subseteq[r^*_X, \infty)$, which implies that in this case, $r^*_{X}$ is a lower bound to the set of all possible wholesale equilibrium prices in the $X+Z$ market.
\end{proof}

\subsection{Market demand variability}\label{sec:variability}
The response of the equilibrium wholesale price to increasing (decreasing) demand variability is even less straightforward. There exist several stochastic orders that compare random variables in terms of their variability and the effects on prices largely depend on the exact order that will be employed. To proceed, we first introduce some additional notation.

\subsubsection{Variability or dispersive orders}\label[paragraph]{varordisp}
Let $X_1\sim F_1$ and $X_2\sim F_2$ be two nonnegative distributions with equal means, $\ex X_1=\ex X_2$, and finite second moments. If $\int_{r}^{\infty}\F_1\(u\)\du\le \int_{r}^{\infty}\F_2\(u\)\du$ for all $r\ge0$, then $X_1$ is said to be smaller than $X_2$ in the \emph{convex order}, denoted by $X_1\cx X_2$. If $F_1^{-1}$ and $F_2^{-1}$ denote the right continuous inverses of $F_1,F_2$ and $F_1^{-1}\(r\)-F_1^{-1}\(s\)\le F_2^{-1}\(r\)-F_2^{-1}\(s\)$ for all $0<r\le s<1$, then $X_1$ is said to be smaller than $X_2$ in the \emph{dispersive order}, denoted by $X_1\disp X_2$. Finally, if $\int_{F_1^{-1}\(p\)}^{\infty} \F_1\(u\)\du \le \int_{F_2^{-1}\(p\)}^{\infty} \F_2\(u\)\du$ for all $p\in \(0,1\)$, then $X_1$ is said to be smaller than 
$X_2$ in the \emph{excess wealth order}, denoted by $X_1\ew Y$. \cite{Sh07} show that $X\disp Y \implies X\ew Y \implies X\cx Y$ which in turn implies that $\var\(X\)\le \var\(Y\)$. 
\paragraph{Does less variability imply a lower (higher) wholesale price?}
The answer to this question largely depends on the notion of variability that we will employ. \cite{Xu10} use the more general $\cx$-order to conclude that under mild additional assumptions, less variability implies higher prices. Concerning the present setting, ordering two demand distributions $X\sim F$ and $Y\sim G$ in the $\cx$-order does not in general suffice to conclude that wholesale prices in the $X$ and $Y$ markets are ordered respectively. This is due to the fact that the $\cx$-order does not imply the $\mrl$-order. An illustration is provided in \Cref{convex1,convex2}. 

\begin{figure*}[!htb]
\includegraphics[width=\linewidth]{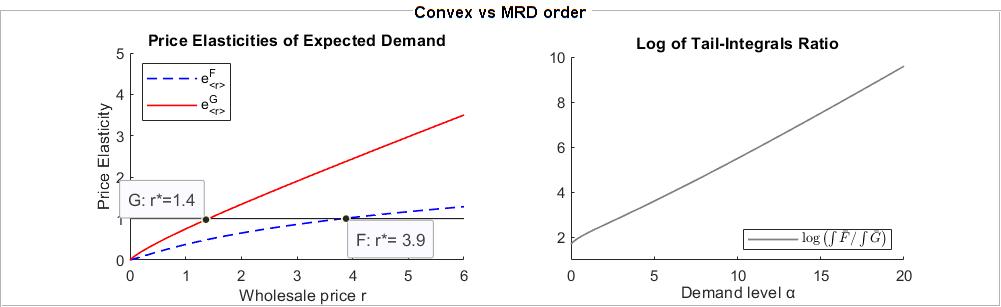}
\caption{Comparison of $X\sim F$, where $F$ denotes a $\text{Lognormal}\(\mu=0.5,\sigma=1\)$ and $ Y\sim G$, where $G$ denotes a $\text{Gamma}\(\alpha=2, \beta=0.25\)$. $Y\cx X$ (right panel) and $r^*_Y< r^*_X$ (left panel).}
\label{convex1}
\end{figure*}

In \Cref{convex1}, we consider two demand distributions, $X\sim F$, a Lognormal $\(\mu=0.5,\sigma=1\)$ and $Y\sim G$, a Gamma $\(\alpha=2, \beta=0.25\)$. For this choice of parameters, $\ex X=\ex Y=0.5$ and hence $X, Y$ are ordered in the $\cx$-order if and only if the tail-integrals of $F$ and $G$ are ordered, see \cite{Sh07} Theorem 3.A.1. The right panel depicts the log of the ratio of these integrals, i.e., $\log{\(\int_{r}^{\infty}\F \du/\int_{r}^{\infty}\G\du\)}$ which remains throughout positive (and increasing). Hence, $Y\cx X$. The left panel depicts the price elasticities of expected demand in the $X$ and $Y$ markets. As can be seen, the supplier charges a higher price in the $X\sim F$ market than in the less variable (according to the $\cx$-order) $Y\sim G$ market. \par
The above conclusion is reversed in the case of \Cref{convex2}. In this example, we consider two demand distributions, $X\sim F$ with $F$, as above, a $\text{Lognormal}\(\mu=0.5,\sigma=1\)$ and $Y\sim G$, a Gamma $\(\alpha=8, \beta=0.25/4\)$. This choice of parameters retains the equality $\ex X=\ex Y=0.5$ and hence, $X, Y$ can be ordered in the $\cx$-order if and only if the tail-integrals of $F$ and $G$ can be ordered. Again, the right panel depicts the log of the ratio of these integrals which remains throughout positive (and increasing). Hence, $Y\cx X$. However, the picture in the left panel is now reversed. As can be seen, the supplier now charges a lower price in the $X\sim F$ market than in the less variable (according to the $\cx$-order) $Y\sim G$ market.
\begin{figure*}[!htb]
\includegraphics[width=\linewidth]{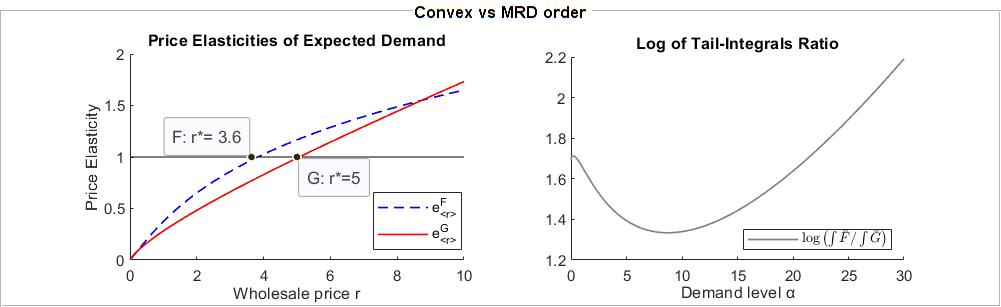}
\caption{Comparison of $X\sim F$, with $F$ $\text{Lognormal}\(\mu=0.5, \sigma=1\)$ and $Y \sim G$ with $G$ $\text{Gamma}\(\alpha=8, \beta=0.25/4\)$. $Y\cx X$ (right panel) and $r^*_X< r^*_Y$ (left panel).}
\label{convex2}
\end{figure*}

More can be said, if we restrict attention to the $\ew$- and $\disp$-orders. We will write $L_i$ to denote the lower end of the support of variable $X_i$ for $i=1,2$. 
\begin{theorem}\label{thm:var}
Let $X_1\sim F_1, X_2\sim F_2$ be two nonnegative, continuous, strictly DGMRD demand distributions with finite second moment. In addition,
\begin{enumerate}[label=(\roman*), noitemsep]
\item if either $X_1$ or $X_2$ are DMRD and $X_1\ew X_2$, and if $L_1\le L_2$, then $r^*_1\le r^*_2$.
\item if either $X_1$ or $X_2$ are IFR and $X_1\disp X_2$, then $r^*_1\le r^*_2$.
\end{enumerate}
\end{theorem}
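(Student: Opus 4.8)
The plan is to reduce both parts to the single comparison $X_1\mrl X_2$ and then invoke \Cref{lem:technical}. This is legitimate here: the hypotheses (strictly DGMRD, finite second moment) are precisely those under which \Cref{thm:main} guarantees that $r_1^*$ and $r_2^*$ exist and are unique, so \Cref{lem:technical} applies and turns $X_1\mrl X_2$ (equivalently $X_2\el X_1$) into $r_1^*\le r_2^*$. Thus in both parts the whole task is to establish $\m_1\(r\)\le\m_2\(r\)$ for every $r\ge0$.

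For part (i) I would first record the quantile form of the excess wealth order. Writing $q_i:=F_i^{-1}$ and using $\F_i\(q_i\(p\)\)=1-p$ (valid because $F_i$ is continuous), one has $\int_{q_i\(p\)}^{\infty}\F_i\(u\)\du=\(1-p\)\m_i\(q_i\(p\)\)$, so $X_1\ew X_2$ is exactly $\m_1\(q_1\(p\)\)\le\m_2\(q_2\(p\)\)$ for all $p\in\(0,1\)$. I would also use that $X_1\ew X_2$ together with $\ex X_1=\ex X_2$ yields $X_1\cx X_2$, i.e. $\F_1\(r\)\m_1\(r\)\le\F_2\(r\)\m_2\(r\)$ for all $r$. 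Then I fix $r$ and split on the sign of $\F_1\(r\)-\F_2\(r\)$. If $\F_1\(r\)\ge\F_2\(r\)$, the convex-order inequality already gives $\m_1\(r\)\le\(\F_2\(r\)/\F_1\(r\)\)\m_2\(r\)\le\m_2\(r\)$ (trivially if $\F_1\(r\)=0$). If $\F_1\(r\)<\F_2\(r\)$, i.e. $F_1\(r\)>F_2\(r\)$, assume without loss of generality that $X_1$ is DMRD, so $\m_1\circ q_1$ is decreasing, and chain $\m_1\(r\)=\m_1\(q_1\(F_1\(r\)\)\)\le\m_1\(q_1\(F_2\(r\)\)\)\le\m_2\(q_2\(F_2\(r\)\)\)=\m_2\(r\)$, the first inequality by this monotonicity together with $F_2\(r\)<F_1\(r\)$ and the second by the quantile form at $p=F_2\(r\)$; the case in which $X_2$ is DMRD is symmetric. (One could instead simply cite the implication that $X_1\ew X_2$ plus a DMRD-type aging condition and $L_1\le L_2$ force $X_1\mrl X_2$, from \cite{Sh07} or \cite{Be13}.) The hypothesis $L_1\le L_2$ serves to exclude a degenerate mismatch of supports; in fact, combined with $X_1\ew X_2$ and equal means, it forces $L_1=L_2$.

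For part (ii) I would run the same bridging one level up, at the hazard rate. The dispersive order has the parallel quantile characterization $\h_1\(q_1\(p\)\)\ge\h_2\(q_2\(p\)\)$ for all $p\in\(0,1\)$ (differentiate the defining inequality $F_1^{-1}\(p\)-F_1^{-1}\(s\)\le F_2^{-1}\(p\)-F_2^{-1}\(s\)$ and use $\h_i=f_i/\F_i$). Feeding in that $X_1$ or $X_2$ is IFR, so that the corresponding $\h_i\circ q_i$ is monotone, and again splitting on $\F_1\(r\)$ versus $\F_2\(r\)$, one gets $\h_1\(r\)\ge\h_2\(r\)$ for all $r$, i.e. $X_1\hr X_2$ (this is the implication that $X_1\disp X_2$ plus an IFR hypothesis gives $X_1\hr X_2$, from \cite{Sh07}); and $X_1\hr X_2\Rightarrow X_1\mrl X_2$ is a standard implication. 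Since moreover $X_1\disp X_2\Rightarrow X_1\ew X_2$ and IFR $\Rightarrow$ DMRD, part (ii) can alternatively be deduced from the argument of part (i).

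The routine ingredients are the two quantile characterizations and some boundary bookkeeping (for $r$ below $L_i$, beyond $H_i$, or on a flat stretch of $F_i$, where $q_i\(F_i\(r\)\)$ need not equal $r$ but the inequalities still survive by continuity and monotonicity of $\m_i$ and $\h_i$). The one genuinely delicate step --- and where I expect the argument to be most fragile --- is precisely this passage from the horizontal (quantile-level) comparisons that $\ew$ and $\disp$ supply to the vertical (pointwise) comparisons $X_1\mrl X_2$ and $X_1\hr X_2$: that is where the aging hypotheses (DMRD, IFR) and the left-endpoint condition are used, and keeping the direction of every monotonicity correct across the two regions $F_1\gtrless F_2$ is the crux.
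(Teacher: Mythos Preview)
Your proposal is correct and takes the same overall route as the paper: in both parts you reduce to $X_1\mrl X_2$ via known implications from \cite{Sh07} and then apply \Cref{lem:technical}. The paper does exactly this, but more tersely---it simply cites Theorem~3.C.5 of \cite{Sh07} for part~(i) and Theorem~3.B.20(b) of \cite{Sh07} (together with $\hr\Rightarrow\mrl$) for part~(ii), without unpacking the arguments.

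Two minor remarks on your unpacking. First, your Case~1 in part~(i) (where $\F_1\(r\)\ge\F_2\(r\)$) leans on $X_1\cx X_2$, which you obtain from $X_1\ew X_2$ \emph{plus equal means}. That equal-means hypothesis is indeed present in the paper---it is declared at the start of \Cref{varordisp} as a standing assumption for the variability comparisons---but it is not repeated in the theorem statement itself, so you may want to make that dependence explicit. Second, your observation that (ii) can alternatively be deduced from (i) via $\disp\Rightarrow\ew$ and $\text{IFR}\Rightarrow\text{DMRD}$ is a nice structural point that the paper does not spell out.
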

\begin{proof}
The first part of \Cref{thm:var} follows directly from Theorem 3.C.5 of \cite{Sh07}. Based on its proof, the assumption that at least one of the two random variables is DMRD (and not merely DGMRD) cannot be relaxed. Part (ii) follows directly from Theorem 3.B.20 (b) of \cite{Sh07} and the fact that the $\hr$-order implies the $\mrl$-order. As in part (i), the condition that both $X_1$ and $X_2$ are DGMRD does not suffice and we need to assume that at least one is IFR. Recall, that $\text{IFR} \subset \text{DMRD} \subset \text{DGMRD}$ with all inclusions being strict, see e.g., \cite{Le18}. 
\end{proof}
The first implication of \Cref{thm:var} is that there exist classes of distributions for which less variability implies lower wholesale prices. This is in contrast with the results of \cite{La01} and \cite{Xu10} (for the additive demand case) and sheds light on the effects of upstream demand uncertainty. In these models, uncertainty falls to the retailer, and the supplier charges a higher price to capture an increasing share of all supply chain profits as variability reduces. Contrarily, if uncertainty falls to the supplier as in the present model, then the supplier may charge a lower price as variability increases. \par
The second implication is that these results, albeit general, do not apply to all distributions that are comparable according to \emph{some} variability order. As illustrated with the examples in \Cref{convex1,convex2} and the convex-order, less variability may lead to both higher or lower wholesale prices. From a managerial perspective, this implies that the effect of demand variability on prices crucially depends on the exact notion of variability that will be employed and may be ambiguous even under the standard setting of linear demand that is studied here.  

\subsubsection{Mean preserving transformation}
To further study the effects of demand variability, we use the mean preserving transformation $X_\kappa:=\kappa X+\(1-\kappa\)\mu$, where $\mu=\ex X$ and $\kappa\in[0,1]$, see \cite{Li05} and \cite{Li17}. Indeed, $\ex X_\kappa =\ex X$ and $\mathrm{Var}\(X_\kappa\)=\kappa^2\mathrm{Var}\(X\)\le \mathrm{Var}\(X\)$, i.e., $X_\kappa$ has the same mean and support as but is \qt{less variable} than $X$. \Cref{thm:kappa} shows that $X_\kappa \mrl X$ and hence, by \Cref{lem:technical} the supplier always sets a higher price in market $X$ than in the \enquote{less variable} market $X_\kappa$. This recovers in a straightforward way the finding of \cite{Li05}.

\begin{theorem}\label{thm:kappa}
Let $X\sim F$ be a nonnegative, continuous, DGMRD demand distribution with finite mean, $\mu$, and variance, $\sigma^2$, and let $X_\kappa:=\kappa X+\(1-\kappa\)\mu$, for $\kappa \in [0,1]$. Then, $X_\kappa \mrl X$ and $r^*_\kappa\le r^*$. 
\end{theorem}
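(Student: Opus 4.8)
The statement has two halves, $X_\kappa\mrl X$ and $r^*_\kappa\le r^*$; the second is immediate from the first by \Cref{lem:technical}, since the mrd order is precisely what governs the supplier's optimal wholesale price. So the content is to show $\m_\kappa\(r\)\le\m\(r\)$ for every $r\ge0$, where $\m_\kappa$ is the MRD function of $X_\kappa$; I take $\kappa\in\(0,1\)$, the cases $\kappa\in\{0,1\}$ being immediate.

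First I would compute $\m_\kappa$. Since $X_\kappa=\kappa X+\(1-\kappa\)\mu$ is a strictly increasing affine map, $\F_\kappa\(r\)=\F\(t\)$ and, substituting $u=\kappa s+\(1-\kappa\)\mu$ in \eqref{eq:mrl}, $\m_\kappa\(r\)\F_\kappa\(r\)=\int_r^\infty\F_\kappa=\kappa\int_t^\infty\F=\kappa\,\m\(t\)\F\(t\)$, where $t:=\(r-\(1-\kappa\)\mu\)/\kappa$, equivalently $r=\kappa t+\(1-\kappa\)\mu$. Hence $\m_\kappa\(r\)=\kappa\,\m\(t\)$, and note that $r$ is a convex combination of $t$ and the mean $\mu$, so it lies between them. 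Next I would reformulate: writing $g\(x\):=x+\m\(x\)=\ex\(X\mid X>x\)$, the inequality $\kappa\,\m\(t\)\le\m\(r\)$ becomes, after adding $r=\kappa t+\(1-\kappa\)\mu$ to both sides and regrouping, exactly $g\(r\)\ge\kappa\,g\(t\)+\(1-\kappa\)\mu$. The argument then rests on three elementary properties of $g$: (i) $g\(x\)\ge x$ (nonnegativity of $\m$); (ii) $g\(x\)\ge\mu$ for every $x\ge0$, i.e.\ the standard bound $\m\(x\)\ge\mu-x$, which follows from $\mu=\ex\(X\wedge x\)+\m\(x\)\F\(x\)\le x+\m\(x\)$; and — this is where the DGMRD hypothesis enters — (iii) $g\(x\)/x$ is nonincreasing on $\(0,\infty\)$, which is just a restatement of DGMRD since $\m\(x\)/x=g\(x\)/x-1$.

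With these in hand I would prove $g\(r\)\ge\kappa\,g\(t\)+\(1-\kappa\)\mu$ by splitting into two cases. If $t\le\mu$, then $t\le r\le\mu$; using that $\ex\(X\mid X>x\)$ is nondecreasing in $x$ together with (ii) one gets $\mu\le g\(t\)\le g\(r\)$, so the convex combination $\kappa\,g\(t\)+\(1-\kappa\)\mu$ lies between $\mu$ and $g\(t\)$ and is therefore at most $g\(r\)$. (One can avoid invoking monotonicity of $g$ here: the identity $\m\(r\)\F\(r\)=\m\(t\)\F\(t\)-\int_t^r\F$, together with $\F$ decreasing and (ii), does the same job.) If instead $t\ge\mu$, then $\mu\le r\le t$, and now (iii) gives $g\(r\)\ge\(r/t\)\,g\(t\)=\kappa\,g\(t\)+\(1-\kappa\)\(\mu/t\)\,g\(t\)$, while $\(\mu/t\)g\(t\)\ge\mu$ by (i). Either way the inequality holds, so $\m_\kappa\le\m$ pointwise, i.e.\ $X_\kappa\mrl X$, and then $r^*_\kappa\le r^*$ follows from \Cref{lem:technical}. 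One caveat to flag: an affine shift can destroy the DGMRD property, so $X_\kappa$ need not satisfy the hypotheses of that lemma verbatim; the clean remedy is to note directly that $\e_\kappa\(r\)=\m_\kappa\(r\)/r\le\m\(r\)/r=\e\(r\)<1$ for every $r>r^*$ (by \Cref{thm:main} applied to $X$), so every stationary point of the supplier's profit in market $X_\kappa$ — in particular $r^*_\kappa$ — lies in $[0,r^*]$; alternatively one invokes \Cref{strict}.

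The main obstacle is conceptual: recognizing the reformulation in terms of $g\(x\)=\ex\(X\mid X>x\)$ and, above all, that the argument must split according to whether $r\le t$ or $r\ge t$, with the DGMRD hypothesis needed — and exactly sufficient — only in the case $r\le t$, while the case $r\ge t$ rests on the weaker bound $\m\(x\)\ge\mu-x$. The remaining work — the affine change of variables, and the bookkeeping of boundary and degenerate cases ($\F_\kappa\(r\)=1$, $t$ outside the support of $X$, $\kappa\in\{0,1\}$) — is routine.
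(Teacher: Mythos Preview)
Your proof is correct and takes a genuinely different route from the paper's. The paper does not compute $\m_\kappa$ at all; instead it reduces the claim to showing that the constant random variable $Y\equiv\mu$ satisfies $Y\mrl X$, verifies this via the ratio characterization $\int_x^\infty\F_X/\int_x^\infty\F_Y$ (which boils down to exactly your bound $\m\(x\)\ge\mu-x$), and then invokes Theorem~2.A.18 of \cite{Sh07} to pass from $Y\mrl X$ to $\kappa X+\(1-\kappa\)Y\mrl X$. So the paper outsources the ``hard'' case to a reference, whereas you prove everything from first principles: your affine change of variables gives $\m_\kappa\(r\)=\kappa\,\m\(t\)$, your reformulation via $g\(x\)=\ex\(X\mid X>x\)$ is clean, and the case split $t\lessgtr\mu$ isolates precisely where DGMRD is needed (only when $t\ge\mu$) versus where the universal bound $g\ge\mu$ suffices. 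The paper's argument is shorter and more conceptual; yours is self-contained and makes the role of the DGMRD hypothesis transparent.

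Your caveat at the end is well taken and in fact sharper than the paper: \Cref{lem:technical} as stated requires \emph{both} distributions to be strictly DGMRD, yet $X_\kappa$ need not inherit this property from $X$ (an affine shift can spoil it). The paper simply cites \Cref{lem:technical} without comment. Your direct fix --- noting that $\e_\kappa\(r\)\le\e\(r\)<1$ for all $r>r^*$ forces every critical point of the $X_\kappa$-profit below $r^*$ --- closes this gap cleanly.
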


\begin{proof}
It suffices to show that $Y\equiv \mu$ is smaller than $X$ in the mrd-order, i.e., that $Y\mrl X$. The conclusion then follows from Theorem 2.A.18 of \cite{Sh07} and \Cref{lem:technical}. In turn, to show that $Y\mrl X$, it suffices to show that $\int_{x}^{\infty}\F_X\(u\)\d u/\int_{x}^{\infty}\F_Y\(u\)\d u$ increases in $x$ over $\left\{x: \int_{x}^{\infty}\F_Y\(u\)\d u>0\right\}$, cf. \cite{Sh07} (2.A.3). Since, $\F_Y\(u\)=\mathbf{1}_{\{x<\mu\}}$, this is equivalent to showing that $\int_{x}^{\infty}\F_X\(u\)\d u/\(\mu-x\)$ increases in $x$ for $x<\mu$. Differentiating with respect to $x$ and reordering the terms, we obtain that the previous expression increases in $x$ for $x<\mu$ if and only if $\m_X\(x\)\ge \mu -x$ for $x\in [0,\mu)$. However, this is immediate, since $\m\(x\)\ge \int_{x}^{\infty}\F_X\(u\)\d u=\mu-\int_{0}^x\F_X\(u\)\d u\ge \mu- x$.  
\end{proof}

\subsubsection{Parametric families of distributions}\label{sub:parametric}
To elaborate on the fact that different variability notions may lead to different responses on wholesale prices, we consider the parametric approach of \cite{La01}. Given a random variable $X$ with distribution $F$, let $X_i:=\delta_i+\lambda_iX$ with $\delta_i\ge0$ and $\lambda_i>0$ for $i=1,2$. \cite{La01} show that in this case, the wholesale price is dictated by the coefficient of variation, $CV_i=\sqrt{\var\(X_i\)}/\ex X_i$. Specifically, if $CV_2<CV_1$, then $r^*_1<r^*_2$, i.e., in their model, a lower $CV$, or equivalently a lower relative variability, implies a higher price. This is not true for our model.\par
To see this, we consider two normal demand distributions $X_1\sim N\(\mu_1,\sigma_1^2\)$ and $X_2\sim N\(\mu_2,\sigma_2^2\)$. By Table 2.2 of \cite{Be16}, if $\sigma_1<\sigma_2$ and $\mu_1\le \mu_2$, then $X_1\mrl X_2$ and hence, by \Cref{lem:technical}, $r^*_1\le r^*_2$. However, by choosing $\sigma_i$ and $\mu_i$ appropriately, we can trivially achieve an arbitrary ordering of their relative variability in terms of their $CV$'s. The reason for this ambiguity is that changing $\mu_i$ for $i=1,2$, not only affects $CV_i$, i.e., the relative variability, but also the central location of the respective demand distribution. In contrast, under the assumption that $\ex X_1=\ex X_2$, the stochastic orders approach of the previous paragraph provides a more clear insight. The results of the comparative statics analysis are summarized in \Cref{tab:statics}.

\begin{table*}[!htb]
\centering
\setlength{\tabcolsep}{4pt}
\renewcommand{\arraystretch}{1.4}
\arrayrulecolor{pcolor}
\begin{tabular}{llr}
\clrtwo
\b Demand Transformations &&\\
\multicolumn{2}{l}{Assumptions on market demand distributions} & Optimal prices\\
\midrule[0.3mm]
$X_1\mrl X_2$ & \specialcell[t]{$Z\ge0$, IFR, independent of $X_1,X_2$} & $\mathbf{r^*_{X_1+Z}}\le \mathbf{r^*_{X_2+Z}}$\\
& $\phi\(x\)$ increasing and convex & $r^*_{\phi\(X_1\)}\le r^*_{\phi\(X_2\)}$\\
& $X_p\sim pF_1+\(1-p\)F_2$, $p\in\(0,1\)$\hspace{53pt} & $r^*_{X_1}\le r^*_{X_p}\le r^*_{X_2}$\\[0.1cm]\bottomrule\\[-0.6cm]
\clrtwo
\b Market Size &&\\
\multicolumn{2}{l}{Assumptions on market demand distributions} & Optimal prices\\
\midrule[0.3mm]
$X$ & $c\ge1$& $r^*_X\le r^*_{cX}$\\
$X$ DMRD & \specialcell[t]{$Z\ge0, \ex Z^2<\infty$, independent of $X$} & $r^*_X\le \mathbf{r^*_{X+Z}}$ \\
$X_1\st X_2$ && inconclusive\\[0.1cm]
\bottomrule\\[-0.6cm]
\clrtwo
\b Demand Variability&&\\
\multicolumn{2}{l}{Assumptions on market demand distributions} & Optimal prices\\
\midrule[0.3mm]
$X_1\cx X_2$ && inconclusive\\
$X_1\ew X_2$ & $L_1\le L_2$ and $X_1$ or $X_2$ DMRD & $r^*_1\le r^*_2$\\
$X_1\disp X_2$ & $X_1$ or $X_2$ IFR & $r^*_1\le r^*_2$\\
$X_i:=\delta_i+\lambda_i X$ & $CV_1\le CV_2, \delta_i\ge0, \lambda_i>0, i=1,2$ & inconclusive\\
$X_{\kappa}:=\kappa X+\(1-\kappa\)\ex X$ & $\kappa \in [0,1]$ & $r^*_{\kappa}\le r^*$\\
\bottomrule[0.3mm]
\end{tabular}
\caption{Summary of the main comparative statics results on the seller's optimal price. The first two columns, \qt{Assumptions on Market Demand} describe the market parameters, characteristics and their relationships (if any) and the third column, \qt{Optimal Prices}, compares the optimal prices in the markets in question. Bold values refer to transformations that yield non-unique optimal prices in which case the notationa \qt{$\le$} should be understood elementwise. If not stated otherwise, $X, X_1$ and $X_2$ satisfy the unimodality conditions of \Cref{thm:main}, i.e., they are strictly DGMRD and have finite second moment.}
\label{tab:statics}
\end{table*}

\section{Market Performance}\label{sec:performance}
We now turn to the effects of upstream demand uncertainty on the efficiency of the vertical market. As in \Cref{subn}, we restrict attention to the classic Cournot competition with linear demand and arbitrary number $n$ of competing retailers in the second stage. After scaling $\beta$ to $1$, this implies that the equilibrium order quantities are $q_i^*\(r\)=\frac{1}{n+1}\(\alpha-r\)_+$ for each $i=1,\dots,n$ and any wholesale price $r\ge0$. The supplier's optimal wholesale price, $r^*$, is given by \Cref{thm:main}.

\subsection{Probability of no-trade}
Markets with incomplete information are usually inefficient in the sense that trades which are profitable for all market participants may actually not take place. In the current model, such inefficiencies appear for values of $\alpha$ for which a transaction does not occur in equilibrium under incomplete information although such a transaction would have been beneficial for all parties involved, i.e., supplier, retailers and consumers.\par
If $\alpha<r^*$, then the retailers buy $0$ units and there is an immediate stockout. Hence, for a continuous distribution $F$ of $\alpha$, the probabilitiy of no-trade in equilibrium under incomplete information is equal to $P\(\alpha\le r^*\)=F\(r^*\)$. To study this probability as a measure of market inefficiency, we restrict attention to the family of DMRD distributions, i.e., distributions for which $\m\(r\)$ is non-increasing.
\begin{theorem}\label{thm:notrade}
For any demand distribution $F$ with the DMRD property, the probability $F\(r^*\)$ of no-trade at the equilibrium of the stochastic market cannot exceed the bound $1-e^{-1}$. This bound is tight over all DMRD distributions.
\end{theorem}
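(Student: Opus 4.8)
The plan is to translate the DMRD hypothesis into a differential inequality for the tail $\F$, integrate it to bound $F(r^*)$, and then exhibit the extremal distribution. Recall from \eqref{eq:fixed} that the optimal price satisfies $\m(r^*)=r^*$, i.e. $\int_{r^*}^{\infty}\F(u)\,\du = r^*\F(r^*)$. The quantity I want to bound is $F(r^*)=1-\F(r^*)$, equivalently I want a lower bound on $\F(r^*)$. Since $\m$ is non-increasing, a standard identity for the mean residual life (see \cite{Lax06}) gives $\F(r)=\frac{\m(0)}{\m(r)}\exp\!\left(-\int_{0}^{r}\frac{1}{\m(u)}\,\du\right)$ for $r<H$, where $\m(0)=\ex\alpha$. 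The DMRD property means $1/\m(u)$ is non-decreasing, so on $[0,r^*]$ we have $\int_0^{r^*}\frac{1}{\m(u)}\,\du \le \frac{r^*}{\m(r^*)}=1$ by the fixed-point equation. Combined with $\m(r^*)\le\m(0)$, this yields $\F(r^*)=\frac{\m(0)}{\m(r^*)}e^{-\int_0^{r^*}\du/\m(u)}\ge 1\cdot e^{-1}=e^{-1}$, hence $F(r^*)\le 1-e^{-1}$.

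The first step is therefore to record the reconstruction formula $\F(r)=\frac{\ex\alpha}{\m(r)}\exp(-\int_0^r\du/\m(u))$, valid for $r<H$ on the support; this is where non-atomicity and finiteness of $\ex\alpha$ from \Cref{thm:main} enter. The second step is the key observation that DMRD plus the FOC $\m(r^*)=r^*$ force $\int_0^{r^*}\du/\m(u)\le 1$: because $1/\m$ is increasing, its average over $[0,r^*]$ is at most its endpoint value $1/\m(r^*)=1/r^*$, so the integral is at most $r^*\cdot(1/r^*)=1$. The third step is the trivial bound $\m(r^*)\le\m(0)=\ex\alpha$, again from DMRD. Assembling these gives the inequality. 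A small point to handle: if $r^*<L$ then $\F(r^*)=1>e^{-1}$ trivially, and if $r^*=H$ one argues by taking $r\uparrow H$; these are degenerate cases, not real obstacles.

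For tightness, the natural candidate is the exponential distribution, which is the unique distribution with constant $\m(r)\equiv\ex\alpha=:\theta$ (the boundary case of DMRD). Then the FOC reads $\theta=r^*$, so $r^*=\theta$, and $F(r^*)=1-e^{-r^*/\theta}=1-e^{-1}$, achieving the bound exactly. One should check via \Cref{thm:main}(b) that the exponential is (weakly) DGMRD with finite second moment so that $r^*$ is indeed well-defined and equals $\theta$; this is routine and is also covered by \Cref{ex:exponential}. Strictly, the exponential has $\e(r)=\theta/r$ which is strictly decreasing, so uniqueness holds and no appeal to \Cref{strict} is needed.

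I expect the only genuine subtlety to be the justification of the reconstruction formula and the manipulation at the upper boundary $H$ when $H=\infty$: one must ensure $\int_0^{r^*}\du/\m(u)$ is finite (it is, being bounded by $1$) and that the formula holds up to $r=r^*$ rather than only on the open interior of the support. Everything else is a short chain of inequalities driven by the monotonicity of $1/\m$ against the equality $\m(r^*)=r^*$.
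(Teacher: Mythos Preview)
Your proposal is correct and follows essentially the same route as the paper: both invoke the inversion formula $\F(r)=\frac{\m(0)}{\m(r)}\exp\{-\int_0^r \du/\m(u)\}$, use DMRD to bound the integral by $r^*/\m(r^*)=1$ via the fixed-point equation, combine with $\m(r^*)\le \m(0)$ to obtain $\F(r^*)\ge e^{-1}$, and then cite the exponential distribution (constant $\m$) for tightness. Your additional remarks on boundary cases and on verifying the hypotheses of \Cref{thm:main} for the exponential are sound elaborations but do not change the argument.
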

\begin{proof}
By expressing the distribution function $F$ in terms of the MRD function, see \cite{Gu88}, we get $F\(r^*\)=1-\frac{\m\(0\)}{\m\(r^*\)}\exp{\left\{-\int_{0}^{r^*}\frac{1}{\m\(u\)}\du\right\}}$. Hence, by the DMRD property and the monotonicity of the exponential function, it follows that $F\(r^*\) \le 1-\frac{\m\(0\)}{\m\(r^*\)}\exp{\left\{-\frac{r^*}{\m\(r^*\)}\right\}}$. Since $r^*=\m\(r^*\)\le \m\(0\)$, we conclude that $F\(r^*\)\le 1- e^{-1}$. If the MRD function is constant, as is the case for the exponential distribution, see \Cref{ex:exponential}, then all inequalities above hold as equalities, which establishes the second claim of the Theorem.
\end{proof} 
\Cref{ex:exponential,ex:beta} highlight the tightness of the no-trade probability bound that is derived in \Cref{thm:notrade}. \Cref{ex:pareto} shows that this bound cannot be extended to the class of DGMRD distributions. The conclusions are summarized in \Cref{prnotrades}.

\begin{example}[Exponential distribution]\label{ex:exponential} Let $\alpha \sim \exp\(\lambda\)$, with $\lambda>0$, and pdf $f\(r\)=\lambda e^{-\lambda r}$ $\mathbf 1_{\{0 \leq r < \infty\}}$. Since $\m\(r\)=1/\lambda$, for $r>0$, the MRD function is constant over its support and, hence, $F$ is both DMRD and IMRD but strictly DGMRD, as $\e\(r\)=1/\lambda r$, for $r>0$. By \Cref{thm:main}, the optimal strategy $r^*$ of the supplier is $r^*=1/\lambda$. The probability of no transaction $F\(r^*\)$ is equal to $F\(r^*\)=F\(1/\lambda\)=1-e^{-1}$, confirming that the bound derived in \Cref{thm:notrade} is tight. Thus, the exponential distribution is the least favorable, over the class of DMRD distributions, in terms of efficiency at equilibrium.
\end{example}

\begin{example}[Beta distribution]\label{ex:beta} This example refers to a special case of the Beta distribution, also known as the Kumaraswamy distribution, see \cite{Jo09}. Let $\alpha \sim Beta\(1,\lambda\)$ with $\lambda>0$, and pdf $f\(r\)= \lambda \(1-r\)^{\lambda-1}\mathbf 1_{\{0<r<1\}}$. Then, $F\(r\) = 1-\(1-r\)^{\lambda}$ and $\m\(r\)=\(1-r\)/\(1+\lambda\)$ for $0 < r < 1$. Since the MRD function is decreasing, \Cref{thm:main} applies and the optimal price of the supplier is $r^*=1/\(\lambda+2\)$. Hence, $F\(r^*\)= 1-\(1-1/\(\lambda+2\)\)^{\lambda} \to 1-e^{-1}$ as $\lambda\to\infty$. This shows that the upper bound of $F\(r^*\)$ in \Cref{thm:notrade} is still tight over distributions with strictly decreasing MRD, i.e., it is not the flatness of the exponential MRD that generated the large inefficiency.
\end{example}

\begin{example}[Generalized Pareto or Pareto II distribution]\label{ex:pareto} This example shows that the bound of \Cref{thm:notrade} does not extend to the class of DGMRD distributions. Let $\alpha \sim \text{GPareto}\(\mu,\sigma,k\)$, with pdf $f\(r\)=\(1+kz\)^{-\(1+1/k\)}/\sigma$ and cdf $F\(r\)=1-\(1+kz\)^{-1/k}$, with $z=\(r-\mu\)/\sigma$. For the parametrization $\mu<\sigma_\epsilon$ and $\sigma_\epsilon=k_\epsilon=\(2+\epsilon\)^{-1}$, with $\epsilon>0$, the cdf becomes $F_\epsilon \(r\)=1-\(1+r-\mu\)^{-\(2+\epsilon\)}$. Moreover, $\ex\alpha_\epsilon^2<\infty$, since $k_\epsilon<1/2$ for any $\epsilon>0$. Hence, by a standard calculation, $\m_\epsilon\(r\)=\(1+r-\mu\)\(1+\epsilon\)$, which shows that $F_\epsilon$ is DGMRD but not DMRD. In this case, $r_\epsilon^*=\(1-\mu\)/\epsilon$ and $F_\epsilon\(r^*\)=1-\(\(1+\epsilon\)\(1-\mu\)/\epsilon\)^{-\(2+\epsilon\)}$, which shows that the probability of a stockout may become arbitrarily large for values of $\epsilon$ close to $0$. The \enquote{pathology} of this example relies on the fact that $\ex\alpha_\epsilon^2\to \infty$ as $\epsilon \searrow 0$.
\end{example}

\begin{figure*}[!bht]
\includegraphics[width=\linewidth]{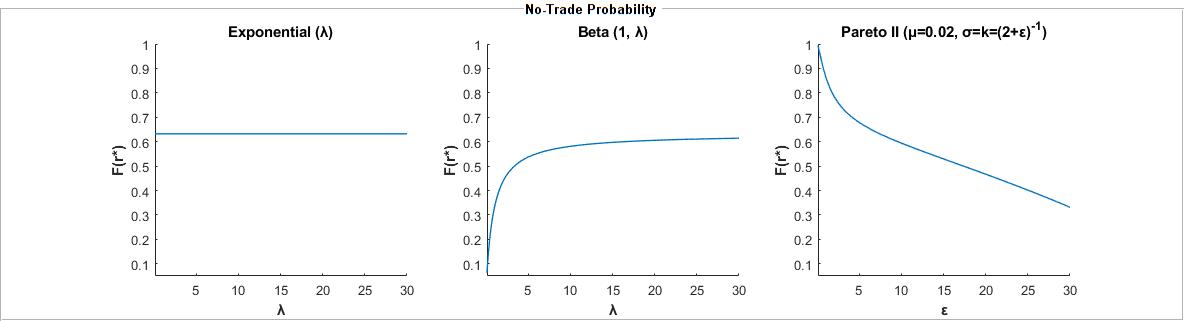}
\caption{Probability $F\(r^*\)$ of no-trade for the Exponential $\(\lambda\)$, Beta $\(1,\lambda\)$ and Pareto II with parameter values $\mu=0.02$, and $\sigma=k=\(2+\epsilon\)^{-1}$ distributions: left, center and right panel respectively. The Exponential and Beta distributions are DMRD and satisfy the $1-e^{-1}$ bound. In contrast, for the choosen range of parameter values, the Pareto II  (or Generalized Pareto) is DGMRD but not DMRD and exhibits no-trade probability that is arbitrarily close to $1$.}
\label{prnotrades}
\end{figure*}

\subsection{Division of realized market profits}
If the realized value of $\alpha$ is larger than $r^*$, then a transaction between the supplier and the retailers takes place. In this case, we measure market efficiency in terms of the \emph{realized market} profits. Specifically, we fix a demand distribution $F$ (which satisfies the sufficiency conditions of \Cref{thm:main}) with support $S$ (with upper and lower bounds $L$ and $H$ respectively, as defined in \Cref{sec:model}) and a realized demand level $\alpha\in S$ and compare the individual realized profits of the supplier and each retailer between the deterministic and the stochastic markets. For clarity, we summarize all related quantities in \Cref{tab:fund}.
\begin{table*}
\centering
\setlength{\tabcolsep}{11pt}
\captionsetup{type=table}
\renewcommand{\arraystretch}{1.4}
\arrayrulecolor{pcolor}
\begin{tabular}{lllll}
\clrtwo
&& \multicolumn{3}{c}{\b Upstream Demand for the Supplier}\\
&&\multicolumn{1}{l}{Uncertain $\alpha\sim F$} &$\phantom{abds}$& \multicolumn{1}{l}{Deterministic $\alpha$} \\
\midrule[0.3mm]
\specialcell{Equilibrium \\[-0.2cm] wholesale price} && \multicolumn{1}{c}{$r^*=\m_F\(r^*\)$} && \multicolumn{1}{c}{$r^*=\alpha/2$}\\[0.1cm]

\clrtwo
&& \multicolumn{3}{c}{\b Realized Profits at Equilibrium}\\
\midrule[0.3mm]
Supplier && $\Pi_s^U=\frac{n}{n+1}r^*\(\alpha-r^*\)_+$ && $\Pi_s^D=\frac{n}{n+1}\(\alpha/2\)^2$\\\\[-0.5cm]
Retailer $i$ && $\Pi_i^U=\frac{1}{\vphantom{\tilde\F}\(n+1\)^2}\(\alpha-r^*\)^2_+$ && $\Pi_i^D=\frac{1}{\vphantom{\tilde\F}\(n+1\)^2}\(\alpha/2\)^2$\\\\[-0.5cm]
Aggregate && $\Pi_\A^U=\Pi_s^U+\sum_{i=1}^n \Pi_i^U$ && $\Pi_\A^D=\Pi_s^U+\sum_{i=1}^n \Pi_i^U$\\\\[-0.5cm]
\bottomrule[0.3mm]
\end{tabular}
\caption{Wholesale price and realized profits in equilibrium for the stochastic (left column) and the deterministic (right column) markets. The realized equilibrium profits correspond to fixed demand level $\alpha\in S$.}
\label{tab:fund}
\end{table*}

We are interested in addressing the following questions: First,  how do the supplier's (retailers') realized profits compare between the stochastic and the deterministic market? Second, how does retail competition and demand uncertainty affect the supplier's (retailers') share of realized market profits? Third, how does the level or retail competition -- number $n$ of retailers -- affect supplier's profits in both markets? The answers are summarized in \Cref{thm:share} which follows rather immediately from \Cref{tab:fund}. To avoid technicalities, we assume throughout that the upper bound $H$ of the support $S$ is large enough, so that $H>2r^*$ (e.g., $H=\infty$).  

\begin{theorem}\label{thm:share}
Let $F$ denote a demand distribution with support $S$ within $L$ and $H$, $r^*$ the respective optimal wholesale price in the stochastic market such that $H>2r^*$, and $\alpha\in S$, with $\alpha>r^*$, a realized demand level, for which trading between supplier and retailers takes place in both the stochastic and the deterministic market. Let, also, $\Pi_s^U/\Pi^U_\A$ and $\Pi_s^D/\Pi^D_\A$ denote the supplier's share of realized profits in the stochastic and deterministic markets respectively. Then,
\begin{enumerate}[label=$\(\roman*\)$,noitemsep]
\item $\Pi_s^U\le \Pi_s^D$, with equality only for $\alpha=2r^*$. In particular, $\Pi_s^U/\Pi_s^D=4\(r^*/\alpha\)\(1-r^*/\alpha\)$ for any $\alpha>r^*$.
\item $\Pi_s^U/\Pi^U_\A$ decreases in the realized demand level $\alpha$.
\item $\Pi_s^D/\Pi^D_\A$ is independent of the demand level $\alpha$.
\item $\Pi_s^U/\Pi^U_\A$ is higher than $\Pi_s^D/\Pi^D_\A$ for values of $\alpha \in \(r^*,2r^*\)$, equal for $\alpha=2r^*$, and lower otherwise.
\item $\Pi_s^D/\Pi^D_\A$ and $\Pi_s^U/\Pi^U_\A$ both increase in the level $n$ of retail competition.
\end{enumerate}
Finally, each retailer's profit in the stochastic market, $\Pi_i^U$, is strictly higher than her profit in the deterministic market $\Pi_i^D$ for all demand levels $\alpha>2r^*$ and less otherwise, with equality for $\alpha=2r^*$ only. 
\end{theorem}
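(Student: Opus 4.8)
The plan is to reduce every assertion to elementary algebra starting from the explicit expressions in \Cref{tab:fund}. Two preliminary observations streamline everything. First, since the realized demand level satisfies $\alpha>r^*$ in both markets, the positive part is inactive, so $\(\alpha-r^*\)_+=\alpha-r^*$, and it is convenient to introduce $x:=r^*/\alpha\in\(0,1\)$. Second, every entry of the deterministic column of \Cref{tab:fund} is obtained from the corresponding entry of the stochastic column by the substitution $r^*\mapsto\alpha/2$; hence any identity proved for the stochastic quantities specializes to the deterministic ones upon setting $r^*=\alpha/2$. With these in hand, I would record the aggregate profit in factored form: summing $\Pi_s^U$ and $n\Pi_i^U$ and pulling out the common factor gives
\[
\Pi_\A^U=\frac{n\(\alpha-r^*\)}{\(n+1\)^2}\(\alpha+nr^*\),
\]
and the substitution $r^*=\alpha/2$ then yields $\Pi_\A^D=\dfrac{n\(n+2\)}{4\(n+1\)^2}\,\alpha^2$.

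For $(i)$, dividing supplier profits gives $\Pi_s^U/\Pi_s^D=\dfrac{r^*\(\alpha-r^*\)}{\(\alpha/2\)^2}=4x\(1-x\)$; since $4x\(1-x\)\le 1$ with equality iff $x=1/2$, this proves $\Pi_s^U\le\Pi_s^D$ with equality precisely at $\alpha=2r^*$. For $(ii)$--$(iv)$, dividing $\Pi_s^U$ by the factored $\Pi_\A^U$ cancels the entire $n\(\alpha-r^*\)/\(n+1\)^2$ factor and leaves
\[
\frac{\Pi_s^U}{\Pi_\A^U}=\frac{\(n+1\)r^*}{\alpha+nr^*}=1-\frac{\alpha-r^*}{\alpha+nr^*}.
\]
The subtracted fraction is increasing in $\alpha$ (its $\alpha$-derivative has numerator $\(n+1\)r^*>0$), so the share is decreasing in $\alpha$, which is $(ii)$. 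Specializing via $r^*=\alpha/2$ collapses the expression to $\dfrac{n+1}{n+2}$, independent of $\alpha$, giving $(iii)$. Finally $\dfrac{\(n+1\)r^*}{\alpha+nr^*}\ge\dfrac{n+1}{n+2}$ is equivalent, after cross-multiplying by the two positive denominators, to $\(n+2\)r^*\ge\alpha+nr^*$, i.e.\ to $\alpha\le 2r^*$, with the inequality strict off the boundary $\alpha=2r^*$; this is $(iv)$.

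For $(v)$, write $\Pi_s^D/\Pi_\A^D=\dfrac{n+1}{n+2}=1-\dfrac1{n+2}$, which is manifestly increasing in $n$, and $\Pi_s^U/\Pi_\A^U=1-\dfrac{\alpha-r^*}{\alpha+nr^*}$, which is increasing in $n$ because the fixed positive numerator $\alpha-r^*$ is divided by the denominator $\alpha+nr^*$ that grows with $n$; here one uses that $r^*$ solves $r^*=\m_F\(r^*\)$ and therefore does not vary with $n$ (cf.\ \Cref{subn}). Lastly, for each retailer, $\Pi_i^U/\Pi_i^D=\(\dfrac{\alpha-r^*}{\alpha/2}\)^{2}=4\(1-x\)^{2}$, which exceeds $1$ iff $1-x>1/2$, i.e.\ iff $\alpha>2r^*$, with equality exactly at $\alpha=2r^*$, as claimed. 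The only genuine obstacle is bookkeeping: choosing the factorization of $\Pi_\A^U$ so that the $n\(\alpha-r^*\)/\(n+1\)^2$ factor cancels cleanly against $\Pi_s^U$, and keeping in mind in $(v)$ that $r^*$ is $n$-independent. Beyond that, each of the claims reduces to a single one-line inequality.
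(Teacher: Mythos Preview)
Your proof is correct and follows essentially the same approach as the paper: both arguments compute the ratios $\Pi_s^U/\Pi_\A^U=\dfrac{(n+1)r^*}{\alpha+nr^*}$ and $\Pi_s^D/\Pi_\A^D=\dfrac{n+1}{n+2}$ directly from \Cref{tab:fund} and then read off each claim as a one-line inequality. Your version is a bit more polished in presentation---the substitution $x=r^*/\alpha$, the observation that the deterministic column is the stochastic column with $r^*\mapsto\alpha/2$, the factored form of $\Pi_\A^U$, and the explicit remark in (v) that $r^*$ is $n$-independent---but there is no genuine methodological difference.
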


\begin{proof} 
By \Cref{tab:fund}, we have that: (i) $\Pi_s^U\le \Pi_s^D$ if and only if $\frac{n}{n+1}r^*\(\alpha-r^*\)_+\le \frac{n}{n+1}\(\alpha/2\)^2$ which holds with strict inequality for all values of $\alpha$, except for $\alpha=2r^*$ for which the quantities are equal. The second part of statement (i) is immediate. For (ii) $\(\Pi^U_s\)/\(\Pi^U_\A\)=\(nr^*+r^*\)/\(nr^*+\alpha\)$, and for (iii) $\(\Pi^D_s\)/\(\Pi^D_\A\)=\(n+1\)/\(n+2\)$. Now, (iv) and (v) directly follow from the previous calculations. Finally, $\Pi_i^U\ge \Pi_i^D$ if and only if $\frac{1}{\(n+1\)^2}\(\(\alpha-r^*\)_+\)^2\ge \frac{1}{\(n+1\)^2}\(\alpha/2\)^2$ which holds with strict inequality for all values of $\alpha>2r^*$ and with equality for $\alpha=2r^*$.
\end{proof}

The statements of \Cref{thm:share} are rather intuitive and in their largest part, with the exception of part (iv), they conform to earlier findings \citep{Ai12}. (i) The supplier is always better off if he is informed about the retail demand level. (ii) In the stochastic market, he captures a larger share of the realized market profits for lower values of realized demand (but not lower than the no-trade threshold of $r^*$) whereas in the deterministic market (iii) his share of profits is constant with respect to the demand level. (iv) Yet, in the stochastic market, there exists an interval of demand realizations, namely $\(r^*,2r^*\)$, for which the supplier's profits (although less than in the deterministic market) represent a larger share of the aggregate market profits. In any case, (v) retail competition benefits the supplier. Finally, in the case that the supplier prices under uncertainty, each retailer makes a larger profit for higher realized demand values which abides to intuition. These observations conform with the existence of conflicting incentives regarding demand-information disclosure between the retailers and the supplier, cf. \cite{Li17}.


\subsubsection{Deterministic and stochastic markets: aggregate profits}\label{sub:aggregate}
We next turn to the comparison of the aggregate market profits between the deterministic and the stochastic market. As before, we fix a demand distribution $F$ (which is again assumed to satisfy the sufficiency conditions of \Cref{thm:main}) with support $S$ within $L$ and $H$, and evaluate the ratio $\Pi_\A^U/\Pi_\A^D$ of the aggregate realized market profits in the stochastic market to the aggregate market profits in the deterministic market. To study market performance under the two scenarios, we need to evaluate the combined effect of demand uncertainty and retail competition. For a realized demand $\alpha\le r^*$, there is a stockout and the realized aggregated profits $\Pi_\A^U$ are equal to $0$. In this case, the stochastic market performs arbitrarily worse than the deterministic market and the ratio is equal to $0$ for any number $n\ge 1$ of competing retailers. Hence, for a non-trivial analysis, we restrict attention to $\alpha>r^*$ for which trading takes place in both the stochastic and the deterministic markets.
\begin{theorem}\label{thm:agg}
Let $F$ denote a demand distribution with support $S$ within $L$ and $H$, with $H$ large enough, and let $r^*$ denote the respective optimal wholesale price in the stochastic market. Additionally, suppose that $\alpha\in S$, with $\alpha>r^*$ is a realized demand level for which trading between supplier and retailers takes place in both the stochastic and the deterministic market. Let, also, $\Pi_\A^U/\Pi^D_\A$ denote the ratio of the aggregate realized profits in the stochastic market to the aggregate profits in the deterministic market. Then,
\begin{enumerate}[label=$\(\roman*\)$,noitemsep]
\item $\Pi_\A^U/\Pi_\A^D> 1$ for $\alpha>2r^*$ if $n=1$ or $n=2$ and for $\alpha\in \(2r^*, \frac{2n}{n-2}r^*\)$ if $n\ge 3$. 
\item $\Pi_\A^U/\Pi_\A^D$ is maximized for $\alpha^*=2nr^*/\(n-1\)$ for $n\ge 2$, for which it is equal to $1+\(n\(n+2\)\)^{-1}$. Moreover, $\Pi_\A^U/\Pi_\A^D$ converges to $4/\(n+2\)$ as $\alpha \to\infty$ for any $n\ge1$.
\item $\Pi_\A^U/\Pi_\A^D$ increases in the level of competition for demand levels $\alpha<2r^*$ and decreases thereafter.
\end{enumerate}
\end{theorem}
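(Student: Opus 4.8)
The plan is to reduce all three claims to elementary calculus on a single scalar function, obtained by normalizing the aggregate-profit ratio by the realized demand. First I would use \Cref{tab:fund}: since $\alpha>r^*$ we have $\(\alpha-r^*\)_+=\alpha-r^*$, and summing the supplier's profit with the $n$ retailers' profits and factoring out $n\(\alpha-r^*\)/\(n+1\)^2$ gives
\[
\Pi_\A^U=\frac{n\(\alpha-r^*\)\(nr^*+\alpha\)}{\(n+1\)^2},\qquad
\Pi_\A^D=\frac{n\(n+2\)}{4\(n+1\)^2}\,\alpha^2 .
\]
Dividing, the factor $n/\(n+1\)^2$ cancels, so that with $x:=r^*/\alpha\in\(0,1\)$,
\[
\frac{\Pi_\A^U}{\Pi_\A^D}=\frac{4\(\alpha-r^*\)\(nr^*+\alpha\)}{\(n+2\)\alpha^2}=\psi\(x\):=\frac{4\(1-x\)\(nx+1\)}{n+2},
\]
a downward parabola in $x$. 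All of (i)--(iii) then become statements about $\psi$, pushed back through the \emph{decreasing} substitution $\alpha=r^*/x$.

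For part (i): $\psi\(x\)>1$ is equivalent to $4nx^2-4\(n-1\)x+\(n-2\)<0$; the discriminant of this quadratic is $16\bigl[\(n-1\)^2-n\(n-2\)\bigr]=16$, so its roots are $x=\dfrac{\(n-1\)\pm 1}{2n}$, i.e. $x=\dfrac{n-2}{2n}$ and $x=\tfrac12$, and, the parabola opening upward, $\psi>1$ holds exactly between them. Translating via $\alpha=r^*/x$, the upper root gives $x<\tfrac12\iff\alpha>2r^*$, and for $n\ge3$ the lower root gives $x>\dfrac{n-2}{2n}\iff\alpha<\dfrac{2n}{n-2}r^*$; for $n\le2$ the lower root is $\le0$, so the lower end of the interval collapses to the trade threshold. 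The hypothesis $H>\dfrac{2n}{n-2}r^*$ guarantees that the interval $\(2r^*,\dfrac{2n}{n-2}r^*\)$ actually meets the support $S$.

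For part (ii), I would differentiate: $\psi'\(x\)=\dfrac{4}{n+2}\(n-1-2nx\)$ vanishes at $x^*=\dfrac{n-1}{2n}$, which lies in $\(0,1\)$ iff $n\ge2$ and, by concavity, is the maximizer; there $\psi\(x^*\)=\dfrac{\(n+1\)^2}{n\(n+2\)}=1+\dfrac1{n\(n+2\)}$ and $\alpha^*=r^*/x^*=\dfrac{2n}{n-1}r^*$. Letting $\alpha\to\infty$ corresponds to $x\to0^+$, so $\psi\to\psi\(0\)=\dfrac4{n+2}$, for every $n\ge1$. For part (iii), with $x$ held fixed I would compare consecutive integers and compute $\psi_{n+1}\(x\)-\psi_n\(x\)=\dfrac{4\(1-x\)\(2x-1\)}{\(n+2\)\(n+3\)}$; its sign is that of $2x-1$, i.e. positive iff $x>\tfrac12\iff\alpha<2r^*$ and negative iff $\alpha>2r^*$, so the ratio is strictly increasing in $n$ below the threshold $2r^*$ and strictly decreasing above it.

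There is no genuine obstacle here: once the variable $x=r^*/\alpha$ is introduced, the whole argument is routine algebra plus one differentiation. The only points requiring care are the inequality reversal when passing from $x$ back to $\alpha$, together with the degenerate cases $n\in\{1,2\}$ in part (i), where the lower root is non-positive so the lower endpoint of the interval reverts to the trade threshold, and $n=1$ in part (ii), where $x^*=0$ sits on the boundary, so $4/3$ is only a supremum and there is no interior maximizer.
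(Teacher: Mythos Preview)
Your approach coincides with the paper's: both obtain the closed form $\Pi_\A^U/\Pi_\A^D = 4(\alpha-r^*)(\alpha+nr^*)/[(n+2)\alpha^2]$ directly from \Cref{tab:fund} and reduce everything to elementary calculus on this single expression. Your substitution $x=r^*/\alpha$, which turns the ratio into the concave quadratic $\psi(x)$, is a tidy repackaging; the paper differentiates in $\alpha$ instead. For part (iii) you use the finite difference $\psi_{n+1}-\psi_n$ while the paper takes $\partial/\partial n$, but both yield the sign of $2x-1$.

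One point needs correction. In part (i) your quadratic analysis correctly gives the roots $x=\tfrac12$ and $x=(n-2)/(2n)$, so for $n\le 2$ the second root is $\le 0$ and $\psi>1$ holds precisely for $x\in(0,\tfrac12)$, i.e.\ $\alpha\in(2r^*,\infty)$. Your claim that ``the lower end of the interval collapses to the trade threshold'' does not follow from this: what your computation actually shows is that the \emph{upper} $\alpha$-endpoint goes to $+\infty$, while the lower endpoint stays at $2r^*$. Concretely, for $n=1$ the ratio equals $\tfrac{4}{3}\bigl(1-(r^*/\alpha)^2\bigr)$, which is strictly below $1$ on $(r^*,2r^*)$ (e.g.\ at $\alpha=\tfrac32 r^*$ it is $20/27$); thus your own algebra does not deliver the stated interval $(r^*,\infty)$ for $n=1$. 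The paper's proof asserts this same interval without computation, so the discrepancy is not a defect of your method relative to the paper's---it is an inaccuracy in the statement of part (i) for $n=1$ that neither argument actually establishes. The interval your analysis supports for $n=1$ is $(2r^*,\infty)$, identical to the $n=2$ case.
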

Again, to avoid unnecessary technicalities in the proof of \Cref{thm:agg}, we assume that $H$ is large enough, e.g., $H=\infty$. 
\begin{proof} By \Cref{tab:fund}, a direct substitution yields that $\Pi_\A^U-\Pi_\A^D>0$ iff 
\[\frac{2-n}4\cdot \alpha^2+r^*\(n-1\)\alpha -n\(r^*\)^2>0\] with $\alpha>r^*$. For $n=1,2$, the result is straightforward, whereas for $n\ge3$ the result follows from the observation that the roots of the expression in the left part are given by $\alpha_{1,2}=2r^*\cdot\frac{n-1\pm1}{n-2}$. This establishes (i) and after some trivial algebra, also (ii). To obtain (iii), we compare $\Pi_\A^U/\Pi_\A^D$ for arbitrary $n$ to $\Pi_\A^U/\Pi_\A^D$ for $n+1$
\[\frac{4\(\alpha-r^*\)\(\alpha+\(n+1\)r^*\)}{\alpha^2\(n+3\)}>\frac{4\(\alpha-r^*\)\(\alpha+nr^*\)}{\alpha^2\(n+2\)}\iff 2r^*>\alpha\]
which yields the statement. 
\end{proof}
Statement (i) of \Cref{thm:agg} asserts that there exists an interval of realized demand values, whose upper bound depends on the number $n$ of competing retailers, for which the stochastic market outperforms the deterministic market in terms of aggregate profits. The effect of increasing retail competition on the aggregate profits of the stochastic market is twofold. First, the range (interval) of demand values for which the ratio of aggregate profits exceeds $1$ reduces to a single point as competition increases ($n\to\infty$). Second, for larger values of realized demand, the ratio converges to $4/\(n+2\)$ as $\alpha\to\infty$. This shows that uncertainty on the side of the supplier is less detrimental for the aggregate market profits when the level of retail competition is low. In particular, for $n=1,2$, the aggregate profits of the stochastic market remain strictly higher than the profits of the deterministic market for all large enough realized demand levels. As competition increases this remains true only for lower (but still above the no-trade threshold) demand levels. However, for higher demand realizations, the ratio degrades linearly in the number of competing retailers. \par
The statements of \Cref{thm:agg} are illustrated in \Cref{ncompare}. Here $\alpha\sim \text{Gamma}\(2,2\)$ but the picture is essentially the same for any choice of demand distribution that satisfies the sufficiency conditions of \Cref{thm:main} and for which $H$ is large enough, i.e., $H>2r^*$.

\begin{figure*}[!htp]
\includegraphics[width=\linewidth]{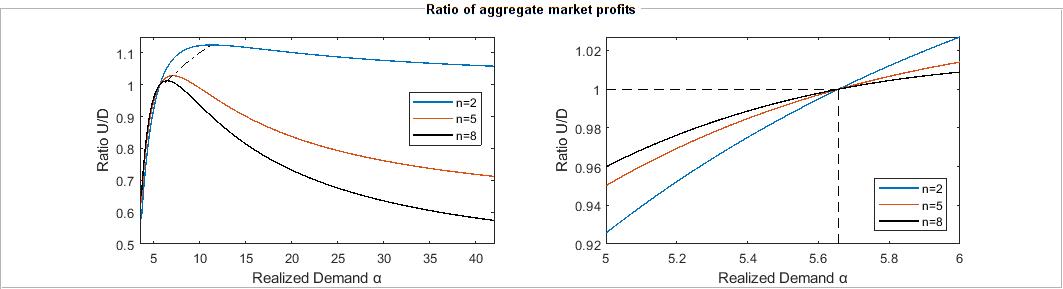}
\caption{The left panel depicts the ratio $\Pi_\A^U/\Pi_\A^D$ for $n=2, 5$ and $8$, where $\alpha \sim \text{Gamma}\(2,2\)$. The dashed line shows the points $\alpha=2nr^*/\( n-1\)$ in which the ratios are maximized, taking the value $1+\(n\(n+2\)\)^{-1}$. The right panel magnifies the interval $[4,7]$ around the intersection point $\alpha=2r^*$ of the three curves. The ratio  increases in $n$ prior to the intersection point, $2r^*\approx 5.657$, and decreases in $n$ thereafter.}
\label{ncompare}
\end{figure*}

\section{Conclusions}\label{sec:conclusions}
In this paper, we revisited the classic problem of optimal pricing by a monopolist who is facing linear stochastic demand. The monopolist may sell directly to the consumers or via a retail market with an arbitrary competition structure. Our main theoretical finding is that the price elasticity of expected demand, and hence also the monopolist's optimal prices, can be expressed in terms of the mean residual demand (MRD) function of the demand distribution. In economic terms, the MRD function describes the expected additional demand given that current demand has reached or exceeded a certain threshold. This leads to a closed form characterization of the points of unitary elasticity that maximize the monopolist's profits and the derivation of a mild unimodality condition for the monopolist's objective function that generalizes the widely used \emph{increasing generalized failure rate} (IGFR) condition. A direct byproduct is a distribution free and tight bound on the probability of no trade between the supplier and the retailers. \par
When we compare optimal prices between markets with different demand characteristics, the main implication of the above characterization is that it allows us to exploit various forms of knowledge on the demand distribution via the theory of stochastic orderings. Specifically, if two markets can be ordered in terms of their mean residual demand function, then the seller's optimal prices can be ordered accordingly. This establishes a link between the price elasticity of expected demand, which is naturally the critical determinant of price movements in response to changes in demand, and some well-understood characteristics of the demand distribution. The stochastic orders approach works under various informational assumptions on the demand distribution and provides a way to systematically exploit the abundance of data that firms possess about historical price/demand. \par 
From a managerial perspective, our study provides a tractable theoretical framework which can be used to reason about price changes in advance of anticipated market movements or to benchmark data-driven predictions that are derived from pricing analytics. Our results suggest that the effects of market size and demand variability on prices critically depend on the notions of size and variability that will be employed. This implies that exact predictions about price movements can only be done in a case by case basis and should only be used with caution. Such tools are particularly useful in industries in which fixing a price often precedes the demand realization such as subscription based businesses or businesses that sell durable goods, tickets or leisure time services. More generally, our findings can be used to explain the diversity of price responses to market characteristics that are observed in practice and provide a diverse toolbox for managers to optimally set and adjust prices under different market conditions.

\section*{Acknowledgements}
Stefanos Leonardos gratefully acknowledges support by the Alexander S. Onassis Public Benefit Foundation and partial support by NRF 2018 Fellowship NRF-NRFF2018-07.

\Urlmuskip=0mu plus 1mu
\bibliographystyle{plainnat}
\bibliography{staticsbib_R1}

\begin{thebibliography}{64}
\providecommand{\natexlab}[1]{#1}
\providecommand{\url}[1]{\texttt{#1}}
\expandafter\ifx\csname urlstyle\endcsname\relax
  \providecommand{\doi}[1]{doi: #1}\else
  \providecommand{\doi}{doi: \begingroup \urlstyle{rm}\Url}\fi

\bibitem[Ai et~al.(2012)Ai, Chen, and Ma]{Ai12}
X.~Ai, J.~Chen, and J.~Ma.
\newblock {Contracting with demand uncertainty under supply chain competition}.
\newblock \emph{Annals of Operations Research}, 201\penalty0 (1):\penalty0
  17--38, Dec 2012.
\newblock \doi{10.1007/s10479-012-1227-x}.

\bibitem[Bagnoli and Bergstrom(2005)]{Ba05}
M.~Bagnoli and T.~Bergstrom.
\newblock Log-concave probability and its applications.
\newblock \emph{Economic Theory}, 26\penalty0 (2):\penalty0 445--469, 2005.
\newblock \doi{10.1007/s00199-004-0514-4}.

\bibitem[Banciu and Mirchandani(2013)]{Ba13}
M.~Banciu and P.~Mirchandani.
\newblock Technical note -- new results concerning probability distributions
  with increasing generalized failure rates.
\newblock \emph{Operations Research}, 61\penalty0 (4):\penalty0 925--931, 2013.
\newblock \doi{10.1287/opre.2013.1198}.

\bibitem[Belzunce et~al.(2013)Belzunce, Martínez-Riquelme, and Ruiz]{Be13}
F.~Belzunce, C.~Martínez-Riquelme, and J.M. Ruiz.
\newblock On sufficient conditions for mean residual life and related orders.
\newblock \emph{Computational Statistics \& Data Analysis}, 61:\penalty0
  199--210, 2013.
\newblock \doi{10.1016/j.csda.2012.12.005}.

\bibitem[Belzunce et~al.(2016)Belzunce, Martinez-Riquelme, and Mulero]{Be16}
F.~Belzunce, C.~Martinez-Riquelme, and J.~Mulero.
\newblock {Chapter 2: Univariate stochastic orders}.
\newblock In F.~Belzunce, C.~Martínez-Riquelme, and J.~Mulero, editors,
  \emph{An Introduction to Stochastic Orders}, pages 27 -- 113. Academic Press,
  2016.
\newblock \doi{10.1016/B978-0-12-803768-3.00002-8}.

\bibitem[Berbeglia et~al.(2019)Berbeglia, Rayaprolu, and Vetta]{Ber19}
G.~Berbeglia, G.~Rayaprolu, and A.~Vetta.
\newblock {Pricing policies for selling indivisible storable goods to strategic
  consumers}.
\newblock \emph{Annals of Operations Research}, 274\penalty0 (1):\penalty0
  131--154, Mar 2019.
\newblock \doi{10.1007/s10479-018-2916-x}.

\bibitem[Bernstein and Federgruen(2005)]{Be05}
F.~Bernstein and A.~Federgruen.
\newblock {Decentralized Supply Chains with Competing Retailers Under Demand
  Uncertainty}.
\newblock \emph{Management Science}, 51\penalty0 (1):\penalty0 18--29, 2005.
\newblock \doi{10.1287/mnsc.1040.0218}.

\bibitem[Bradley and Gupta(2003)]{Br03}
D.~Bradley and R.~Gupta.
\newblock {Limiting behaviour of the mean residual life}.
\newblock \emph{Annals of the Institute of Statistical Mathematics},
  55\penalty0 (1):\penalty0 217--226, 2003.
\newblock \doi{10.1007/BF02530495}.

\bibitem[{Chen} et~al.(2004){Chen}, {Yan}, and {Yao}]{Che04}
F.~Y. {Chen}, H.~{Yan}, and L.~{Yao}.
\newblock {A newsvendor pricing game}.
\newblock \emph{IEEE Transactions on Systems, Man, and Cybernetics - Part A:
  Systems and Humans}, 34\penalty0 (4):\penalty0 450--456, 2004.
\newblock \doi{10.1109/TSMCA.2004.826290}.

\bibitem[Chen et~al.(2017)Chen, Hu, and Perakis]{Ch17}
H.~Chen, M.~Hu, and G.~Perakis.
\newblock {Distribution-Free Pricing}.
\newblock Mimeo, Available at SSRN, 2017.

\bibitem[Chen et~al.(2018)Chen, Nan, and Li]{Che18}
L.~Chen, G.~Nan, and M.~Li.
\newblock {Wholesale Pricing or Agency Pricing on Online Retail Platforms: The
  Effects of Customer Loyalty}.
\newblock \emph{International Journal of Electronic Commerce}, 22\penalty0
  (4):\penalty0 576--608, 2018.
\newblock \doi{10.1080/10864415.2018.1485086}.

\bibitem[Cohen et~al.(2016)Cohen, Perakis, and Pindyck]{Co16}
M.~C. Cohen, G.~Perakis, and R.~S. Pindyck.
\newblock {Pricing with Limited Knowledge of Demand}.
\newblock In \emph{Proceedings of the 2016 ACM Conference on Economics and
  Computation}, EC ’16, page 657, 2016.
\newblock \doi{10.1145/2940716.2940734}.

\bibitem[Colombo and Labrecciosa(2012)]{Co12}
L.~Colombo and P.~Labrecciosa.
\newblock A note on pricing with risk aversion.
\newblock \emph{European Journal of Operational Research}, 216\penalty0
  (1):\penalty0 252--254, 2012.
\newblock ISSN 0377-2217.
\newblock \doi{10.1016/j.ejor.2011.07.027}.

\bibitem[Dana(2001)]{Da01}
J.D. Dana.
\newblock Monopoly price dispersion under demand uncertainty.
\newblock \emph{International Economic Review}, 42\penalty0 (3):\penalty0
  649--670, 2001.
\newblock URL \url{http://www.jstor.org/stable/827024}.

\bibitem[De~Wolf and Smeers(1997)]{Wo97}
D.~De~Wolf and Y.~Smeers.
\newblock {A Stochastic Version of a Stackelberg-Nash-Cournot Equilibrium
  Model}.
\newblock \emph{Management Science}, 43\penalty0 (2):\penalty0 190--197, 1997.
\newblock \doi{10.1287/mnsc.43.2.190}.

\bibitem[Dong et~al.(2019)Dong, Guo, and Turcic]{Don19}
L.~Dong, X.~Guo, and D.~Turcic.
\newblock {Selling a Product Line Through a Retailer When Demand Is Stochastic:
  Analysis of Price-Only Contracts}.
\newblock \emph{Manufacturing \& Service Operations Management}, 21\penalty0
  (4):\penalty0 742--760, 2019.
\newblock \doi{10.1287/msom.2018.0720}.

\bibitem[Guess and Proschan(1988)]{Gu88}
F.~Guess and F.~Proschan.
\newblock {Mean residual life: theory and applications}.
\newblock In P.~R. Krishnaiah and C.~R. Rao, editors, \emph{Quality Control and
  Reliability}, volume~7 of \emph{Handbook of Statistics}, pages 215--224.
  Elsevier Amsterdam, 1988.
\newblock \doi{10.1016/S0169-7161(88)07014-2}.

\bibitem[Hall and Wellner(1981)]{Ha81}
W.~Hall and J.~Wellner.
\newblock {Mean residual life}.
\newblock In M.~Cs{\o}rg{\o}, D.~A. Dawson, J.~N.~K. Rao, and A.~K. Md.~E.
  Saleh, editors, \emph{Proceedings of the International Symposium on
  Statistics and Related Topics}, pages 169--184. North Holland Amsterdam,
  1981.

\bibitem[Harris and Raviv(1981)]{Har81}
M.~Harris and A.~Raviv.
\newblock {A Theory of Monopoly Pricing Schemes with Demand Uncertainty}.
\newblock \emph{The American Economic Review}, 71\penalty0 (3):\penalty0
  347--365, 1981.
\newblock \doi{10.2307/1802784}.

\bibitem[Huang et~al.(2013)Huang, Leng, and Parlar]{Hu13}
J.~Huang, M.~Leng, and M.~Parlar.
\newblock Demand functions in decision modeling: A comprehensive survey and
  research directions.
\newblock \emph{Decision Sciences}, 44\penalty0 (3):\penalty0 557--609, 2013.
\newblock \doi{10.1111/deci.12021}.

\bibitem[Hwang et~al.(2018)Hwang, Bakshi, and DeMiguel]{Hw18}
W.~Hwang, N.~Bakshi, and V.~DeMiguel.
\newblock {Wholesale Price Contracts for Reliable Supply}.
\newblock \emph{Production and Operations Management}, 27\penalty0
  (6):\penalty0 1021--1037, 2018.
\newblock \doi{10.1111/poms.12848}.

\bibitem[Jones(2009)]{Jo09}
C.~Jones.
\newblock {Kumaraswam{y's} distribution: A beta-type distribution with some
  tractability advantages}.
\newblock \emph{Statistical Methodology}, 6:\penalty0 70--81, 2009.
\newblock \doi{10.1016/j.stamet.2008.04.001}.

\bibitem[Kocabıyıko\u{g}lu and Popescu(2011)]{Ko11}
A.~Kocabıyıko\u{g}lu and I.~Popescu.
\newblock {An Elasticity Approach to the Newsvendor with Price-Sensitive
  Demand}.
\newblock \emph{Operations Research}, 59\penalty0 (2):\penalty0 301--312, 2011.
\newblock \doi{10.1287/opre.1100.0890}.

\bibitem[Koki et~al.(2018)Koki, Leonardos, and Melolidakis]{Kok18}
C.~Koki, S.~Leonardos, and C.~Melolidakis.
\newblock \emph{{Comparative Statics via Stochastic Orderings in a Two-Echelon
  Market with Upstream Demand Uncertainty}}, pages 331--343.
\newblock Springer International Publishing, Cham, 2018.
\newblock \doi{10.1007/978-3-030-00473-6_36}.

\bibitem[Kouvelis et~al.(2018)Kouvelis, Xiao, and Yang]{Ko18}
P.~Kouvelis, G.~Xiao, and N.~Yang.
\newblock {On the Properties of Yield Distributions in Random Yield Problems:
  Conditions, Class of Distributions and Relevant Applications}.
\newblock \emph{Production and Operations Management}, 27\penalty0
  (7):\penalty0 1291--1302, 2018.
\newblock \doi{10.1111/poms.12869}.

\bibitem[Krishnan(2010)]{Kr10}
H.~Krishnan.
\newblock {A note on demand functions with uncertainty}.
\newblock \emph{Operations Research Letters}, 38\penalty0 (5):\penalty0
  436--440, 2010.
\newblock \doi{10.1016/j.orl.2010.06.001}.

\bibitem[Kyparisis and Koulamas(2018)]{Ky18}
G.J. Kyparisis and C.~Koulamas.
\newblock {The price-setting newsvendor with nonlinear salvage revenue and
  shortage cost}.
\newblock \emph{Operations Research Letters}, 46\penalty0 (1):\penalty0 64--68,
  2018.
\newblock \doi{10.1016/j.orl.2017.11.001}.

\bibitem[Lai and Xie(2006)]{Lax06}
C.-D. Lai and M.~Xie.
\newblock \emph{Stochastic Ageing and Dependence for Reliability}.
\newblock Springer Science {+} Business Media, Inc., 2006.

\bibitem[Lariviere(1999)]{La99}
M.A. Lariviere.
\newblock {Supply Chain Contracting and Coordination with Stochastic Demand}.
\newblock In S.~Tayur, R.~Ganeshan, and M.~Magazine, editors,
  \emph{Quantitative Models for Supply Chain Management}, pages 233--268.
  Springer US, Boston, MA, 1999.
\newblock \doi{10.1007/978-1-4615-4949-9_8}.

\bibitem[Lariviere(2006)]{La06}
M.A. Lariviere.
\newblock {A Note on Probability Distributions with Increasing Generalized
  Failure Rates}.
\newblock \emph{Operations Research}, 54\penalty0 (3):\penalty0 602--604, 2006.
\newblock \doi{10.1287/opre.1060.0282}.

\bibitem[Lariviere and Porteus(2001)]{La01}
M.A. Lariviere and E.~Porteus.
\newblock {Selling to the Newsvendor: An Analysis of Price-Only Contracts}.
\newblock \emph{Manufacturing \& Service Operations Management}, 3\penalty0
  (4):\penalty0 293--305, 2001.
\newblock \doi{10.1287/msom.3.4.293.9971}.

\bibitem[{Leonardos} and {Melolidakis}(2020)]{Le18}
S.~{Leonardos} and C.~{Melolidakis}.
\newblock {A Class of Distributions for Linear Demand Markets}.
\newblock \emph{arXiv e-prints}, 2020.
\newblock URL \url{https://arxiv.org/abs/1805.06327}.

\bibitem[Leonardos and Melolidakis(2020{\natexlab{a}})]{Leo20}
S.~Leonardos and C.~Melolidakis.
\newblock {On the Equilibrium Uniqueness in Cournot Competition with Demand
  Uncertainty}.
\newblock \emph{The B.E. Journal of Theoretical Economics}, 20\penalty0
  (2):\penalty0 20190131, 2020{\natexlab{a}}.
\newblock \doi{10.1515/bejte-2019-0131}.

\bibitem[Leonardos and Melolidakis(2020{\natexlab{b}})]{Leon20}
S.~Leonardos and C.~Melolidakis.
\newblock {Endogenizing the Cost Parameter in Cournot Oligopoly}.
\newblock \emph{International Game Theory Review}, 22\penalty0 (02):\penalty0
  2040004, 2020{\natexlab{b}}.
\newblock \doi{10.1142/S0219198920400046}.

\bibitem[Leonardos and Melolidakis(2021)]{Leo21}
S.~Leonardos and C.~Melolidakis.
\newblock {On the Mean Residual Life of Cantor-Type Distributions: Properties
  and Economic Applications}.
\newblock \emph{Austrian Journal of Statistics}, 50\penalty0 (4):\penalty0
  65–77, 2021.
\newblock \doi{10.17713/ajs.v50i4.1118}.

\bibitem[Li and Petruzzi(2017)]{Li17}
M.~Li and N.C. Petruzzi.
\newblock {Technical Note--Demand Uncertainty Reduction in Decentralized Supply
  Chains}.
\newblock \emph{Production and Operations Management}, 26\penalty0
  (1):\penalty0 156--161, 2017.
\newblock \doi{10.1111/poms.12626}.

\bibitem[Li and Atkins(2005)]{Li05}
Q.~Li and D.~Atkins.
\newblock {On the effect of demand randomness on a price/quantity setting
  firm}.
\newblock \emph{IIE Transactions}, 37\penalty0 (12):\penalty0 1143--1153, 2005.
\newblock \doi{10.1080/07408170500288182}.

\bibitem[L{\'o}pez and Vives(2019)]{Lop19}
A.~L. L{\'o}pez and X.~Vives.
\newblock {Overlapping Ownership, R\&D Spillovers, and Antitrust Policy}.
\newblock \emph{Journal of Political Economy}, 127\penalty0 (5):\penalty0
  2394--2437, 2019.
\newblock \doi{10.1086/701811}.

\bibitem[Luo et~al.(2016)Luo, Sethi, and Shi]{Lu16}
S.~Luo, S.P. Sethi, and R.~Shi.
\newblock {On the optimality conditions of a price setting newsvendor problem}.
\newblock \emph{Operations Research Letters}, 44\penalty0 (6):\penalty0
  697--701, 2016.
\newblock \doi{https://doi.org/10.1016/j.orl.2016.08.005}.

\bibitem[Mandal et~al.(2018)Mandal, Kaul, and Jain]{Ma18}
P.~Mandal, R.~Kaul, and T.~Jain.
\newblock Stocking and pricing decisions under endogenous demand and reference
  point effects.
\newblock \emph{European Journal of Operational Research}, 264\penalty0
  (1):\penalty0 181--199, 2018.
\newblock \doi{10.1016/j.ejor.2017.05.053}.

\bibitem[Melolidakis et~al.(2018)Melolidakis, Leonardos, and Koki]{Bel18}
C.~Melolidakis, S.~Leonardos, and C.~Koki.
\newblock {Measuring Market Performance with Stochastic Demand: Price of
  Anarchy and Price of Uncertainty}.
\newblock In S.~Destercke, T.~Denoeux, F.~Cuzzolin, and A.~Martin, editors,
  \emph{Belief Functions: Theory and Applications}, pages 163--171, Cham, 2018.
  Springer International Publishing.
\newblock \doi{10.1007/978-3-319-99383-6_21}.

\bibitem[Mills(1959)]{Mi59}
E.S. Mills.
\newblock Uncertainty and price theory.
\newblock \emph{The Quarterly Journal of Economics}, 73\penalty0 (1):\penalty0
  116--130, 1959.

\bibitem[Mukherjee and Chatterjee(1992)]{Mu92}
S.~P. Mukherjee and A.~Chatterjee.
\newblock Closure under convolution of dominance relations.
\newblock \emph{Calcutta Statistical Association Bulletin}, 42\penalty0
  (3-4):\penalty0 251--254, 1992.
\newblock \doi{10.1177/0008068319920309}.

\bibitem[Mukherjee and Chatterjee(1990)]{Mu90}
S.~P. Mukherjee and Aditya Chatterjee.
\newblock {On Some Closure Properties of Dominance Relations Between Life
  Distributions Based on Reliability Measures}.
\newblock \emph{Calcutta Statistical Association Bulletin}, 40\penalty0
  (1--4):\penalty0 325--342, 1990.
\newblock \doi{10.1177/0008068319900525}.

\bibitem[Padmanabhan and Png(1997)]{Pa97}
V.~Padmanabhan and I.P.L. Png.
\newblock {Manufacturer's Return Policies and Retail Competition}.
\newblock \emph{Marketing Science}, 16\penalty0 (1):\penalty0 81--94, 1997.
\newblock \doi{10.1287/mksc.16.1.81}.

\bibitem[Paul(2005)]{Pa05}
A.~Paul.
\newblock {A Note on Closure Properties of Failure Rate Distributions}.
\newblock \emph{Operations Research}, 53\penalty0 (4):\penalty0 733--734, 2005.
\newblock \doi{10.1287/opre.1040.0206}.

\bibitem[Perakis and Roels(2007)]{Pe07}
G.~Perakis and G.~Roels.
\newblock {The Price of Anarchy in Supply Chains: Quantifying the Efficiency of
  Price-Only Contracts}.
\newblock \emph{Management Science}, 53\penalty0 (8):\penalty0 1249--1268,
  2007.
\newblock \doi{10.1287/mnsc.1060.0656}.

\bibitem[Petruzzi and Dada(1999)]{Pe99}
N.C. Petruzzi and M.~Dada.
\newblock {Pricing and the Newsvendor Problem: A Review with Extensions}.
\newblock \emph{Operations Research}, 47\penalty0 (2):\penalty0 183--194, 1999.
\newblock \doi{10.1287/opre.47.2.183}.

\bibitem[Riley and Zeckhauser(1983)]{Ril83}
J.~Riley and R.~Zeckhauser.
\newblock {Optimal Selling Strategies: When to Haggle, When to Hold Firm}.
\newblock \emph{The Quarterly Journal of Economics}, 98\penalty0 (2):\penalty0
  267--289, 05 1983.
\newblock \doi{10.2307/1885625}.

\bibitem[Rubio-Herrero and Baykal-Gürsoy(2018)]{He18}
J.~Rubio-Herrero and M.~Baykal-Gürsoy.
\newblock {On the unimodality of the price-setting newsvendor problem with
  additive demand under risk considerations}.
\newblock \emph{European Journal of Operational Research}, 265\penalty0
  (3):\penalty0 962--974, 2018.
\newblock \doi{10.1016/j.ejor.2017.08.055}.

\bibitem[Shaked and Shanthikumar(2007)]{Sh07}
M.~Shaked and G.~Shanthikumar.
\newblock \emph{{Stochastic Orders}}.
\newblock Springer Series in Statistics, New York, 2007.
\newblock ISBN 0-387-32915-3.

\bibitem[Shaked and Shanthikumar(1991)]{Sh91}
M.~Shaked and J.~G. Shanthikumar.
\newblock {Dynamic Multivariate Mean Residual Life Functions}.
\newblock \emph{Journal of Applied Probability}, 28\penalty0 (3):\penalty0
  613--629, 1991.

\bibitem[Singh and Vives(1984)]{Si84}
N.~Singh and X.~Vives.
\newblock {Price and Quantity Competition in a Differentiated Duopoly}.
\newblock \emph{{The RAND Journal of Economics}}, 15\penalty0 (4):\penalty0
  546--554, 1984.
\newblock URL \url{http://www.jstor.org/stable/2555525}.

\bibitem[Singh and Maddala(1976)]{Si76}
S.~K. Singh and G.~S. Maddala.
\newblock {A Function for Size Distribution of Incomes}.
\newblock \emph{Econometrica}, 44\penalty0 (5):\penalty0 963--70, 1976.
\newblock \doi{10.2307/1911538}.

\bibitem[Song et~al.(2008)Song, Ray, and Li]{So08}
Y.~Song, S.~Ray, and S.~Li.
\newblock {Structural Properties of Buyback Contracts for Price-Setting
  Newsvendors}.
\newblock \emph{Manufacturing \& Service Operations Management}, 10\penalty0
  (1):\penalty0 1--18, 2008.
\newblock \doi{10.1287/msom.1060.0140}.

\bibitem[Song et~al.(2009)Song, Ray, and Boyaci]{So09}
Y.~Song, S.~Ray, and T.~Boyaci.
\newblock {Technical Note -- Optimal Dynamic Joint Inventory-Pricing Control
  for Multiplicative Demand with Fixed Order Costs and Lost Sales}.
\newblock \emph{Operations Research}, 57\penalty0 (1):\penalty0 245--250, 2009.
\newblock \doi{10.1287/opre.1080.0530}.

\bibitem[Tyagi(1999)]{Ty99}
R.K. Tyagi.
\newblock {On the Effects of Downstream Entry}.
\newblock \emph{Management Science}, 45\penalty0 (1):\penalty0 59--73, 1999.
\newblock \doi{10.1287/mnsc.45.1.59}.

\bibitem[{V}an~den Berg(2007)]{Be07}
G.J. {V}an~den Berg.
\newblock On the uniqueness of optimal prices set by monopolistic sellers.
\newblock \emph{Journal of Econometrics}, 141\penalty0 (2):\penalty0 482--491,
  2007.
\newblock \doi{10.1016/j.jeconom.2006.10.005}.

\bibitem[Wu et~al.(2012)Wu, Chen, and Hsieh]{Wu12}
C.-H. Wu, C.-W. Chen, and C.-C. Hsieh.
\newblock {Competitive pricing decisions in a two-echelon supply chain with
  horizontal and vertical competition}.
\newblock \emph{International Journal of Production Economics}, 135\penalty0
  (1):\penalty0 265--274, 2012.
\newblock \doi{10.1016/j.ijpe.2011.07.020}.
\newblock Advances in Optimization and Design of Supply Chains.

\bibitem[Xu et~al.(2010)Xu, Chen, and Xu]{Xu10}
M.~Xu, Y.(F.) Chen, and X.~Xu.
\newblock The effect of demand uncertainty in a price-setting newsvendor model.
\newblock \emph{European Journal of Operational Research}, 207\penalty0
  (2):\penalty0 946--957, 2010.
\newblock \doi{10.1016/j.ejor.2010.06.024}.

\bibitem[Xue et~al.(2017)Xue, Zuo, and Xu]{Xue17}
W.~Xue, J.~Zuo, and X.~Xu.
\newblock {Analysis of market competition and information asymmetry on selling
  strategies}.
\newblock \emph{Annals of Operations Research}, 257\penalty0 (1):\penalty0
  395--421, 2017.
\newblock \doi{10.1007/s10479-015-1809-5}.

\bibitem[Yang and Zhou(2006)]{Ya06}
S.-L. Yang and Y.-W. Zhou.
\newblock {Two-echelon supply chain models: Considering duopolistic
  retailers’ different competitive behaviors}.
\newblock \emph{International Journal of Production Economics}, 103\penalty0
  (1):\penalty0 104--116, 2006.
\newblock \doi{10.1016/j.ijpe.2005.06.001}.

\bibitem[Yao et~al.(2006)Yao, Chen, and Yan]{Yao06}
L.~Yao, Y.(F.) Chen, and H.~Yan.
\newblock {The newsvendor problem with pricing: Extensions}.
\newblock \emph{International Journal of Management Science and Engineering
  Management}, 1\penalty0 (1):\penalty0 3--16, 2006.
\newblock \doi{10.1080/17509653.2006.10670993}.

\bibitem[Ye and Sun(2016)]{Ye16}
T.~Ye and H.~Sun.
\newblock {Price-setting newsvendor with strategic consumers}.
\newblock \emph{Omega}, 63:\penalty0 103--110, 2016.
\newblock ISSN 0305-0483.
\newblock \doi{10.1016/j.omega.2015.10.006}.

\end{thebibliography}
\end{document}